\newtheorem{theorem}{Theorem}[section]
\newtheorem{lemma}[theorem]{Lemma}
\newtheorem{defi}[theorem]{Definition}
\newtheorem{rem}[theorem]{Remark}
\newtheorem{prop}[theorem]{Proposition}
\newtheorem{cor}[theorem]{Corollary}
\DeclareMathOperator{\im}{im}
\DeclareMathOperator{\ind}{ind}
\DeclareMathOperator{\sfl}{sf}
\DeclareMathOperator{\sgn}{sgn}
\newcommand{\C}{\mathbb{C}}
\newcommand{\R}{\mathbb{R}}
\newcommand{\Z}{\mathbb{Z}}
\newcommand\cA{\mathcal{A}}
\newcommand\cD{\mathcal{D}}
\newcommand\cL{\mathcal{L}}
\newcommand\cM{\mathcal{M}}
\newcommand\cN{\mathcal{N}}
\newcommand\cR{\mathcal{R}}
\newcommand\cS{\mathcal{S}}
\newcommand\cU{\mathcal{U}}
\newcommand{\Id}{\mathrm{Id\,}}
\newcommand{\ve}{\varepsilon}
\newcommand{\ga}{\gamma}
\newcommand{\la}{\lambda}
\newcommand\fs{\Phi_{S}(H)}
\newcommand{\sig}{\operatorname{sig}}
\newcommand{\spf}{\operatorname{sf}}
\newcommand{\ip}[2]{\langle #1, #2\rangle}
\newcommand{\hand}{\hbox{ \,and\, }}
\newcommand{\hif}{\hbox{ \,if\,} }
\newcommand{\bt}{\begin{theorem}}
\newcommand{\et}{\end{theorem}}
\newcommand{\bc}{\begin{corollary}}
\newcommand{\ec}{\end{corollary}}
\newcommand{\bp}{\begin{proposition}}
\newcommand{\ep}{\end{proposition}}
\newcommand{\bl}{\begin{lemma}}
\newcommand{\el}{\end{lemma}}
\newcommand{\er}{\end{remark}}
\newcommand{\bd}{\begin{definition}}
\newcommand{\ed}{\end{definition}}
\newcommand{\be}{\begin{equation}}
\newcommand{\ee}{\end{equation}}
\newcommand{\ba}{\begin{array}}
\newcommand{\ea}{\end{array}}
\newcommand{\ra}{\rightarrow}
\title{Bifurcation of critical points for continuous families of $C^2$ functionals of Fredholm type}
\author{Jacobo Pejsachowicz and Nils Waterstraat}
\begin{document}
\date{}
\maketitle

\begin{abstract}
Given a continuous family  of $C^2$ functionals of Fredholm type,  we show that the non-vanishing of the spectral flow for the family of Hessians along a known (trivial) branch of critical points  not only entails bifurcation of nontrivial  critical points  but also  allows to estimate the number of bifurcation points along the branch.  We use this result for several parameter bifurcation, estimating  the number of connected components of the complement of the set of bifurcation points  and  apply our results  to bifurcation of  periodic orbits of Hamiltonian systems.  By means of a comparison principle  for the spectral flow,  we  obtain lower bounds for  the number of bifurcation points of periodic orbits on a given interval in terms of the coefficients of the linearization.
\end{abstract}

\maketitle

\footnotetext[1]{{\bf 2010 Mathematics Subject Classification: Primary 58E07; Secondary 47J15, 37J45, 37J20}}
\footnotetext[2]{J. Pejsachowicz  was supported by  MIUR-PRIN 2009-Metodi variazionali e topologici nei fenomeni nonlineari.}
\footnotetext[3]{N. Waterstraat was supported by the German Academic Exchange Service (DAAD) and  GNAMPA-INdAM.}

\section{Introduction}
In this paper we deal with bifurcation of critical points of a continuous family of $C^2$ functionals $\{\psi_\lambda \colon U \rightarrow\mathbb{R}\}_{\lambda\in\Lambda}$, defined on an open neighborhood $U$ of $0$ in a separable Hilbert space $H$  and parametrized by a topological space $\Lambda$. Such a family is given by a function   $\psi\colon\Lambda\times U \rightarrow\mathbb{R}$ such that,  for any $\lambda \in \Lambda$, the map  $\psi_\lambda \equiv\psi(\lambda,\cdot)\colon U \rightarrow\mathbb{R}$ is $C^2$, and moreover for any $k$, $1\leq k\leq 2$, the map sending the point $(\lambda, x)$ to the $k$-th differential $d^k \psi_\lambda(x)$ is a continuous map from $\Lambda\times U$ into  the normed space of symmetric $k$ forms, $Sym^k(H,\mathbb{R}).$ In what follows, we will always assume that the point $0$ is a critical point of  $\psi_\lambda$, for all $\lambda\in \Lambda$. Families of functionals of this type arise when the data of the problem are not smooth enough in order to ensure the differentiability with respect to the parameter. \\
Let $f_\lambda(x)  = \nabla\psi_\lambda(x)$ be the gradient of $\psi_\lambda$ at the point $x\in U$, and let $L_\lambda=Df_\lambda(0)$  be the Fr\'{e}chet differential  of $f_\lambda$ at the critical point $0\in H$. The operator  $L_\lambda$ is the Hessian of $\psi_\lambda$ at the critical point $0$. Each $L_\lambda$ is a bounded self-adjoint operator and $\psi_\lambda(x)=\langle L_\lambda x, x\rangle +o(\|x\|^2).$\\
Since $\psi$ is  a continuous family of $C^2$ functionals, $f$ is continuous in both variables and differentiable in $x$. Moreover, the map $L\colon\Lambda\rightarrow \mathcal L_S(H)$ defined by $L(\lambda) =L_\lambda$  is continuous with respect to the norm topology in the space $\mathcal L_S(H)$ of all bounded self-adjoint operators from $H$ into itself.  We assume in addition that each $L_\lambda$ is Fredholm, namely, that it has a closed image and finite dimensional kernel. From the self-adjointness of $L_\lambda$, we have that $\ind L_\lambda = \dim \ker L_\lambda - \text{codim}\, \im L_\lambda=0$.\\
The space of all self-adjoint Fredholm operators $\Phi_S(H)$ is an open subset of $\mathcal L_S(H)$. Therefore, by considering possibly a smaller neighborhood $U$ of $0$, we can suppose that $Df_\lambda(x)$ is  Fredholm for every $x\in U.$ Thus $f$ is a continuous family of $C^1$-Fredholm maps possessing a variational structure,  with $\psi$ as associated family of potential functions. A  family $\psi$ as above will be called  {\it  a continuous family of Fredholm $C^2$ functionals.} The set $\mathcal T\equiv \Lambda\times \{0\}$ is  called the {\it trivial branch.}  
 
\begin{defi}
A point  $\lambda_\ast\in\Lambda$ is a point of {\sl bifurcation  of critical points} of the family $\psi$ from the trivial branch if  every neighborhood of $(\lambda_\ast,0)$ in $\Lambda \times U$ contains some point $(\lambda,x)$, where $x\neq 0$ is a critical point of $\psi_\lambda.$
\end{defi}

By the implicit function theorem,  bifurcation can occur only at points $\lambda\in\Lambda$ where $L_\lambda$ fails to be invertible. In this paper  we will  deal   first with the case  where  $\Lambda=[a,b]$ is a compact interval and then we will use  the obtained results  in several-parameter bifurcation.  We assume that both $L_a$ and $L_b$ are invertible and will look for sufficient conditions ensuring the existence of at least one bifurcation point for the family $\psi$ in $(a,b).$\\ 
When $\psi $ is $C^2$ and $H$ is finite dimensional  it is a folklore result that bifurcation arises in $(a,b)$ whenever the Morse index  $\mu(\psi_a,0)$ differs from $\mu(\psi_b,0)$. This result  extends to $C^2$ functionals $\psi\colon [a,b]\times U\ra \R$  if the Hessians  $L_\la $ of $\psi_\la$ at $0$ are either essentially positive or essentially negative. Let  us recall that  a self-adjoint operator $L_\la$ is  essentially positive  if it is   a compact perturbation of a positive definite  self-adjoint operator.  If $\psi_\la$ is as above and if  $ 0$ is a non-degenerate critical point of $\psi_\la, $  its  Morse index  $\mu(\psi_\la, 0)$  is the dimension of the maximal negative space of its Hessian $L_\la$ at $0$. With this definition the corresponding bifurcation theorem holds in the  form stated  in finite dimensions. The same holds for functionals with essentially negative definite Hessian.\\
The above results are sufficient for many applications  of bifurcation theory to nonlinear differential equations, but not for all of them. Indeed, in variational problems arising in the theory of Hamiltonian systems, e.g., geodesics on semi-Riemannian manifolds, and perturbations of self-adjoint systems of first order elliptic differential operators, one deals with strongly indefinite functionals $\psi$, where $\mu(\psi,x)$ and $\mu(-\psi,x)$ are both infinite. In this case the invariant that substitutes the difference between the Morse indices at the end points is the {\sl spectral flow} of a path of self-adjoint Fredholm operators.\\
Roughly speaking, given a path $L\colon I \rightarrow \Phi_S(H)$ of self-adjoint Fredholm operators with invertible end-points, its spectral flow $\sfl(L,I)\in\mathbb{Z}$ is the number of negative eigenvalues of $L_a$ that become positive as the parameter $\lambda$ travels from $a$ to $b$ minus the number of positive eigenvalues of $L_a$ that become negative. When the operators of the path are essentially positive, then the spectral flow of $L$ is the difference of the Morse indices at the end-points. However, if the operators in the path are strongly indefinite, i.e. neither $L$ nor $-L$ consists of essentially positive operators, then $\sfl(L,I)$ depends in general on the whole path and not only on its end-points.\\ 
The spectral flow was  introduced for the first time for paths of elliptic  self-adjoint operators in \cite{AtPatSi76} and since then its definition was extended with  various degrees of generality and used in  linear and nonlinear functional analysis. Let us quote as references \cite{BosandWoj85, Floer88, CapLeMi, RS, Phillips, MPP, Wah, PW}.\\
The above heuristic description of  the  spectral flow  can be made rigorous in many different ways.  A neat geometric understanding  of the spectral flow can be obtained  by interpreting this invariant as  an oriented intersection number of the path with the singular  variety  $\Sigma$  of all non-invertible  self-adjoint Fredholm operators. The  set   $\Sigma$  is a one codimensional stratified subvariety of the open subset $\Phi_S(H)\subset\mathcal{L}_S(H)$ of all bounded self-adjoint Fredholm operators on $H.$   For $k\geq 1,$  the  stratum  $\Sigma_k =\{ T\in \Phi_S(H): \dim  \ker T=k \}$ of the variety  $\Sigma $ is a submanifold (not closed in $\Phi_S(H)$). The  top stratum $\Sigma _1$ is of codimension  $1$  in $\Phi_S(H),$ while the other strata are of codimension three or higher. $\Sigma_1$ posses a nowhere vanishing normal vector field, i.e., is co-oriented.  This allows us to define  $ \sfl(L,I)$ by approximating the path $L$ with a smooth path having transversal intersections with $\Sigma$ (hence, not intersecting the  strata of higher order)  and  counting the intersection points of the  approximating path with  $ \Sigma _1$   with  signs $\pm 1$ according to whether the orientation of the tangent vector to the path coincides or not with that of a chosen normal field (cf. \cite{FPR}).\\
The graph of any  self-adjoint operator is a  Lagrangian subspace of the product $H\times H$ with its natural (cotangent) symplectic structure. In this picture $\Sigma$  becomes the  "train" (or "Maslov Cycle") of $H\times\{0\}.$  Using this, the spectral flow can be interpreted as an infinite dimensional Maslov index for paths in the Fredholm Lagrangian Grassmannian. This approach leads to the definition of spectral flow for families  of unbounded self-adjoint operators as well (cf. \cite{NicolaescuCRAS}).\\
That the spectral flow  is the right homotopy  invariant of the path of linearizations for the study of bifurcation of critical points of families of strongly-indefinite functionals was found in \cite{FPR}.  In the present paper we  will improve the  results of \cite{FPR} in several ways.\\
 First of all, we extend  the bifurcation theorem of  \cite{FPR}  to continuous families of $C^2$ functionals parametrized by an interval by showing  that bifurcation of critical points arise  whenever  the path of Hessians along the trivial branch has a non-vanishing spectral flow.  For this we  will  use the homotopy invariance of the Conley index of an isolated invariant set in the central part of the proof of the theorem, after having reduced the problem to finite dimensions. The Conley index needs less differentiability assumptions about the functional  than the homology groups of a critical point  used in \cite{FPR}.  In addition  we  estimate the number of   bifurcation  points of the family $\psi$  in terms of the spectral flow $\sfl (L,I) $ of the path of Hessians  along the trivial branch and the highest  stratum $\Sigma_m $ crossed by $L.$   Our  conclusion in Theorem \ref{thm:bif} is as follows: if  the crossing points with $\Sigma$  are isolated,  then  $\psi$ must have at least $|\sfl (L,I) |/m$ bifurcation points.\\
When the parameter space is a general topological space $\Lambda,$ upon appropriate assumptions, we will use  the above result along  paths in  $\Lambda$  in order to  estimate   the number of connected components  of the complement of the set of bifurcation points. This is done in Theorem \ref{thm:fam}.\\
Bifurcation of periodic and homoclinic orbits of Hamiltonian systems is a natural place for applications of the above theory since the associated functionals are of strongly indefinite type.  We apply our results to  bifurcation of $2\pi$-periodic orbits of  time-depending, periodic  Hamiltonian systems whose Hamiltonian  function has  a time independent Hessian. In this case the linearized equation has constant coefficients and from this fact  one easily obtains sufficient conditions for bifurcation and estimates for the number of components  directly in terms of the coefficients.\\
The case of  general non-autonomous periodic  Hamiltonian systems  is more involved. However, for the one-parameter case, we obtain in Proposition \ref{prop:last} some  estimates for the number of bifurcation points for $2\pi$-periodic orbits in terms of the coefficients of its  linearization along the stationary branch.  The estimate is deduced from a general  comparison principle  for the spectral flow which is stated in Theorem \ref{thm:comparison} and might have other uses as well.\\
The paper is organized as follows: in Section \ref{sec:mt} we state our main results.  In Section \ref{sec:sfl} we recall some well known properties of the spectral flow and prove Proposition \ref{prop:fingen}. The sections  \ref{sec:bif} and \ref{sec:fam} are devoted to the proofs of the theorems \ref{thm:bif} and \ref{thm:fam}, respectively. Applications to several parameter  bifurcation of periodic orbits of Hamiltonian systems with time independent Hessians are given in Section \ref{sec:apl}.  In  Section \ref{sec:comp}  we establish the  comparison principle which is used in Section \ref{sec:est} in order to obtain an estimate for the number of bifurcation points for non-trivial periodic orbits of a Hamiltonian system from the stationary branch.


\section{The main theorems}\label{sec:mt}
  
Our main result reads as follows: 

\begin{theorem}\label{thm:bif}
 Let  $U$ be a  neighborhood of  $0$ in a separable Hilbert space $H$ and let  $\psi\colon I \times U \rightarrow\mathbb{R}$ be a  continuous family  of  $C^2$  functionals parametrized by $I =[a,b].$  Assume that $0$ is a critical point of the functional $\psi _\lambda \equiv \psi(\lambda,\cdot)$ for each $\lambda\in I$. Moreover, assume that the Hessians  $L_\lambda$  of $ \psi_\lambda $ at $0$ are Fredholm  with   $L_a$  and  $L_b$ invertible.
 
 \begin{itemize}
  \item[i)] If $\sfl(L,I)\neq 0$, then the interval $(a,b)$ contains at least one point of bifurcation of critical points of $\psi_\lambda$ from the trivial branch.\\
  
\item[ii)]  If $L$ intersects $\Sigma$ only at a finite number of points  $\lambda\in I$,  then the family $\psi$ possesses at least 
$|\sfl(L,I)|/m$
bifurcation  points in $(a,b),$ where $m$ is the highest  order of the stratum crossed by $L,$ i.e., 
 \[m=\text{max}\{ \dim \ker L_\lambda: \lambda \in [a,b]\} .\]  
\end{itemize}
\end{theorem}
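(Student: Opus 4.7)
The plan is to prove (i) by contradiction using the continuation invariance of the Conley index after a parametrized Lyapunov--Schmidt reduction, and then derive (ii) by partitioning $[a,b]$ and applying (i) on each subinterval together with the concatenation property of the spectral flow.

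First I would execute a parametrized Lyapunov--Schmidt reduction. Since $L\colon I\to\Phi_S(H)$ is norm-continuous and $L_\lambda$ is Fredholm of index zero for each $\lambda$, I can cover $[a,b]$ by finitely many closed subintervals $J$ on which one can choose a single finite-dimensional subspace $V\subset H$ containing the spectral subspace of $L_\lambda$ corresponding to all small-modulus eigenvalues, for every $\lambda\in J$. With the orthogonal decomposition $H=V\oplus W$, the operator $P_WL_\lambda|_W$ is uniformly invertible on $J$. Since $\psi$ is $C^2$ in $x$ with $d^2\psi_\lambda$ jointly continuous, the equation $P_W\nabla\psi_\lambda(v+w)=0$ can be solved by the implicit function theorem for $w=w_\lambda(v)$ with $w_\lambda(0)=0$, where $w$ is continuous in $(\lambda,v)$ and $C^1$ in $v$. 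The reduced functional $\phi_\lambda(v)=\psi_\lambda(v+w_\lambda(v))$ is $C^2$ in $v\in V$, continuous in $\lambda$, its critical points near $0$ correspond bijectively to those of $\psi_\lambda$, and the spectral flow of the path of reduced Hessians at $0$ agrees with $\sfl(L,J)$, as will follow from the additivity and naturality properties of spectral flow recalled in Section~\ref{sec:sfl}.

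Assume for contradiction that (i) fails, so $(a,b)$ contains no bifurcation point. On each subinterval $[\alpha,\beta]$ of the cover one may arrange, by refinement, that $L_\alpha$ and $L_\beta$ are invertible, so that $0$ is a non-degenerate critical point of $\phi_\alpha$ and $\phi_\beta$. The no-bifurcation hypothesis provides a closed ball $\overline{B}\subset V$ in which $0$ is the only critical point of $\phi_\lambda$ for every $\lambda\in[\alpha,\beta]$, so $\{0\}$ is an isolated invariant set of the negative gradient flow of $\phi_\lambda$ on $V$ and its Conley index $h(\lambda)$ is well-defined. Conley's continuation theorem yields $h(\alpha)\simeq h(\beta)$ as pointed homotopy types; since the endpoints are non-degenerate one has $h(\alpha)\simeq S^{\mu_\alpha}$ and $h(\beta)\simeq S^{\mu_\beta}$, forcing $\mu_\alpha=\mu_\beta$ and hence $\sfl(L,[\alpha,\beta])=0$. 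Summing over the finite cover gives $\sfl(L,I)=0$, a contradiction.

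For (ii), order the crossings of $L$ with $\Sigma$ as $\lambda_1<\cdots<\lambda_N$ and interleave invertible parameters $a=t_0<\lambda_1<t_1<\cdots<t_{N-1}<\lambda_N<t_N=b$. By concatenation, $\sfl(L,I)=\sum_{i=1}^N\sfl(L,[t_{i-1},t_i])$, and since at most $\dim\ker L_{\lambda_i}\leq m$ eigenvalues cross zero on the subinterval $[t_{i-1},t_i]$, one has $|\sfl(L,[t_{i-1},t_i])|\leq m$. Subintervals free of bifurcation points contribute zero by (i), so at least $|\sfl(L,I)|/m$ of them contain a bifurcation point, giving the stated lower bound. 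The principal obstacle is to execute the Lyapunov--Schmidt reduction rigorously under only continuous dependence on $\lambda$, producing a reduced family that is jointly continuous with $C^2$ dependence on $v$ and whose Hessian path has spectral flow equal to $\sfl(L,J)$; once this is in hand, Conley's continuation in finite dimensions and the interval partition argument for (ii) proceed in a standard manner.
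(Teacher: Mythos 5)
Your overall architecture matches the paper's: a parametrized Lyapunov--Schmidt reduction to finite dimensions followed by Conley-index continuation for (i), then concatenation plus a kernel-dimension bound for (ii). However, part (i) as written has a genuine gap. Because you reduce only locally in $\lambda$, you must subdivide $[a,b]$ by junction points at which $L$ is invertible, and you claim this can be ``arranged by refinement''. Nothing in the hypotheses of (i) makes the singular set $\Sigma(L)$ nowhere dense: $\ker L_\lambda$ may be nontrivial on an entire subinterval (this is exactly why (ii) carries the extra finiteness assumption), and then no admissible junction points exist in that region, the Morse indices $\mu_\alpha,\mu_\beta$ of the reduced Hessians are undefined there, and the decomposition $\sfl(L,I)=\sum\sfl(L,[\alpha,\beta])$ is unavailable. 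Nor can you simply enlarge a subinterval to swallow such a degenerate plateau, since enlarging $V$ (shrinking $W$) can destroy invertibility of the compression $P_WL_\lambda\mid_W$, and a single finite-dimensional reduction subspace valid on a long subinterval is not automatic for a strongly indefinite path. The paper's route avoids this entirely, and it is precisely the step you skip: first conjugate by a cogredient parametrix so that $L_\lambda=J+K_\lambda$ with $K_\lambda$ compact; then, by Lemma 4 of \cite{CFP}, one single splitting $H=H_n\oplus H_n^\perp$ works for \emph{all} $\lambda\in[a,b]$ simultaneously (Proposition \ref{sflowfin}), so the only endpoints ever needed are $a$ and $b$, which are invertible by hypothesis. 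A smaller inaccuracy in the same step: a fixed $V$ containing the small-spectrum spectral subspaces of every $L_\lambda$, $\lambda\in J$, is more than continuity gives; what one actually gets (and needs) is $V$ equal to a spectral subspace of one $L_{\lambda_0}$ together with invertibility of the complementary compression for nearby $\lambda$.

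Two further points are asserted where an argument is required. First, the identity $\sfl(L,J)=\mu(l_\alpha)-\mu(l_\beta)$ for the reduced Hessians does not follow from ``additivity and naturality'' alone: you need either the congruence invariance of the spectral flow together with the block factorization $M_\lambda^\ast L_\lambda M_\lambda=l_\lambda\oplus D_\lambda$ (Schur complement, with $D_\lambda=P_WL_\lambda\mid_W$ invertible and $\sfl(D,J)=0$), or the paper's comparison of $l^n_i$ with $L^n_i$ in Proposition \ref{samemorse}. Second, in (ii) the inequality $|\sfl(L,[t_{i-1},t_i])|\leq \dim\ker L_{\lambda_i}$ is the crucial quantitative step, and ``at most $\dim\ker L_{\lambda_i}$ eigenvalues cross zero'' is only a heuristic for a merely continuous path; the paper proves it in Lemma \ref{sflred} by conjugating the spectral projections with Kato's transformation functions and reducing to $\ker L_{\lambda_i}$ (your own local reduction with $V=\ker L_{\lambda_i}$ would also yield it, but it must be written out). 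Your concluding count in (ii) --- discarding subintervals whose singular point is not a bifurcation point because (i) forces their spectral flow to vanish --- is correct and coincides with the paper's argument.
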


\vspace{0.5cm}

There is a number of cases in which the hypothesis of ii)  are verified: 
\begin{itemize} 
\item[1)]  The path $L$ is real analytic. Indeed,  in this case,  the set  $ \Sigma(L):= L^{-1}(\Sigma)$ of all  {\it singular points}  of the path $L$ has to be discrete, because   on a small neighborhood of any point  $\la\in I$  the set $\Sigma(L)$ coincides with the set of zeroes of the analytic function  $\det(U^*LU_{| H_0}),$ where $U$ and $H_0$ are as in the proof of Lemma \ref{sflred} below. 

\item[2)] The path $L$ is differentiable and has only regular crossing points in $I.$  A regular crossing point  is a point  $\lambda \in \Sigma(L)$ at which the crossing form $\mathcal{Q}(\lambda),$ defined as the restriction of the quadratic form $\ip{\dot{L}_\lambda h}{h}$ to  $\ker L_\lambda,$  is non-degenerate.  The proof  that regular crossing  points are isolated  can be found in  \cite[Theorem 4.1]{FPR}. It also follows from the above  theorem that, for paths having only regular crossing points, one has 
\begin{equation}\label{sumsfl}
\sfl(L,I) = \sum_{ \la\in \Sigma(L)} \sig \mathcal{Q}(\la),
\end{equation}
where $\sig$ stands for the signature of a quadratic form.
It is easy to  see that at regular crossing points  $ \sig \mathcal{Q}(\la)$ coincides with the  "crossing number"  studied in \cite{CL, K}. From this viewpoint the spectral flow  arise  in variational  bifurcation theory as an improvement of this, earlier defined invariant at isolated points in $\Sigma(L).$

\item[3)] A differentiable path is  said to be  positive if the quadratic form $\mathcal{Q}(\la)$ is positive definite at each  crossing point. By the above discussion, positive paths have only regular crossings and moreover  $\sig \mathcal{Q}(\la)= \dim \ker L_\la.$ 
Therefore positive paths verify the second hypothesis in Theorem \ref{thm:bif}. Moreover, in this case  \[\sfl(L,I)= \sum_{\la \in \Sigma(L)}\dim \ker L_{\la}.\] 

A typical positive path on $(0,\infty)$ is $L_\la = \la  \Id - K$, where $K $ is a compact operator. A well known theorem by Krasnoselskii states that  if  $\phi$ is  a weakly continuous functional  such that $\nabla\phi(0)=0,$  then  every  non-vanishing eigenvalue of the Hessian of $\phi$ at $0$ is a bifurcation point  for solutions  of the variational equation  $ \la x- \nabla\phi( x)=0.$  Thus  Krasnoselskii's  theorem is  a very special case of Theorem \ref{thm:bif}.

\item[4)]  The  positivity  can be   formulated for paths that are only continuous,  by requiring that for each singular point $\la_0$ of $L$ in $I$ there is a neighborhood $ I_\delta =(\la_0-\delta, \la_0+\delta)$ and  an increasing function $\gamma \colon I_\delta \rightarrow \R$ with  $\gamma(\la_0)=0,$ such that:
\begin{align*}
\ip{L(\la)-L(\la_0)u}{u}&\le \gamma(\la)\ip{u}{u} \quad\forall\, \la \in (\la_0-\delta , \la_0],\,\, \forall \, u\in \ker L_{\la_0},\\
\ip{L(\la)-L(\la_0)u}{u}&\ge \gamma(\la)\ip{u}{u}\quad\forall\, \la \in [\la_0, \la_0 +\delta),\,\, \forall  \, u\in \ker L_{\la_0},
\end{align*}
and 
$\lim_{\la\to 0}\frac{\Vert L(\la)-L(\la_0)\Vert^2}{\gamma(\la)}=0.$
In this case (cf. \cite{FPS}), one still has that the singular points are  isolated and 
$\sfl(L,I)= \sum_{\la \in \Sigma(L)}\dim \ker L_{\la}.$
\end{itemize}

\vskip10pt

Using the above theorem, we will  obtain estimates on the number of connected components of the complement of the set of bifurcation points of families of functionals parametrized by more general topological spaces. More precisely, we assume that $\Lambda$ is a connected topological space and $\psi$ a continuous family of Fredholm $C^2$ functionals such that $0\in H$ is a critical point of all $\psi_\lambda$, $\lambda\in\Lambda$.\\
 Let us denote the set of all bifurcation points in $\Lambda$ by $B(\psi)$. In what follows, we call a path $\gamma:I\rightarrow\Lambda$ admissible if the operators $L_{\gamma(a)}$ and $L_{\gamma(b)}$ are isomorphisms. The spectral flow $\sfl(L\circ{\gamma},I)$ of $L$ along an admissible path is well defined and it is additive under concatenation. We will say that  the family  $\psi$ satisfies the assumption (A) if  for any admissible path $\ga$ the spectral flow $\sfl(L\circ{\gamma},I)$ depends only on the end-points of the path, or equivalently,  if $\sfl(L\circ{\gamma},I)=0$ for all closed admissible paths. 

\begin{theorem}\label{thm:fam}
Let $\Lambda$ be a connected topological space and $\psi:\Lambda\times H\rightarrow\mathbb{R}$ a continuous family of Fredholm $C^2$ functionals such that $0\in H$ is a critical point of all $\psi_\lambda$, $\lambda\in\Lambda$, and such that (A) holds.

\begin{itemize}
	\item[i)] If there exists an admissible path $\gamma$ in $\Lambda$ such that $\sfl(L\circ\gamma,I)\neq 0$, then $\Lambda\setminus B(\psi)$ is disconnected.
	
	\item[ii)] If there exists a sequence of admissible paths $\gamma_n$, $n\in\mathbb{N}$, such that
	$\lim_{n\rightarrow\infty}|\sfl(L\circ\gamma_n,I)|=\infty,$ then $\Lambda\setminus B(\psi)$ has infinitely many path components. 

 \item[iii)]   If $ \Sigma(L) = B(\psi),$ any admissible path $\ga$ such that  $L\circ\ga$  has only isolated singular points will  cross at least   $\frac{|\sfl(L\circ\gamma)|}{m} +1 $ components of $\Lambda \setminus B(\psi),$ where $m$ is defined as in Theorem \ref{thm:bif}. 
\end{itemize}
\end{theorem}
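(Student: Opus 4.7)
The plan is to reduce all three parts to a single observation: \emph{if $\eta:[a,b]\to\Lambda$ is any continuous path with admissible endpoints whose image avoids $B(\psi)$, then $\sfl(L\circ\eta,[a,b])=0$.} To prove this, consider the pullback family $\tilde\psi(t,x):=\psi(\eta(t),x)$, which is a continuous family of $C^2$ Fredholm functionals on $[a,b]$ with trivial branch at $0$ and Hessian path $L\circ\eta$. If its spectral flow were nonzero, Theorem~\ref{thm:bif}(i) would produce a bifurcation point $t_*\in(a,b)$ of $\tilde\psi$; continuity of $\eta$ then forces $\eta(t_*)\in B(\psi)$, contradicting the hypothesis. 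Combined with assumption (A), this observation lets us introduce a potential $\phi(\lambda):=\sfl(L\circ\sigma)$ on admissible points, where $\sigma$ is any admissible path from a fixed basepoint to $\lambda$. Assumption (A) gives well-definedness and the identity $\sfl(L\circ\gamma,I)=\phi(\gamma(b))-\phi(\gamma(a))$ for every admissible $\gamma$, while the observation above shows $\phi$ is constant on the admissible points within each path component of $\Lambda\setminus B(\psi)$.

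With $\phi$ in hand, (i) is immediate: if $\gamma(a)$ and $\gamma(b)$ lay in the same component of $\Lambda\setminus B(\psi)$ they would share the value of $\phi$, forcing $\sfl(L\circ\gamma,I)=0$, contrary to assumption. For (ii), if $\Lambda\setminus B(\psi)$ had only finitely many path components, $\phi$ would take only finitely many values on admissible points, so the sequence $\sfl(L\circ\gamma_n,I)=\phi(\gamma_n(b))-\phi(\gamma_n(a))$ would be bounded, contradicting $|\sfl(L\circ\gamma_n,I)|\to\infty$.

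For (iii), the hypothesis $\Sigma(L)=B(\psi)$ makes the isolated singular points of $L\circ\gamma$ coincide with the times at which $\gamma$ meets $B(\psi)$, giving finitely many crossings $a<t_1<\cdots<t_N<b$. On each of the $N+1$ open intervals between consecutive crossings (including $[a,t_1)$ and $(t_N,b]$), $\gamma$ is confined to a single path component of $\Lambda\setminus B(\psi)$; label these $C_0,\dots,C_N$. Picking admissible times $u_0=a$, $u_N=b$, and $u_i\in(t_i,t_{i+1})$ for $1\le i\le N-1$, additivity of $\sfl$ yields
\[
\sfl(L\circ\gamma,I)=\sum_{i=1}^{N}\bigl(\phi(C_i)-\phi(C_{i-1})\bigr),
\]
and applying Theorem~\ref{thm:bif}(ii) to each sub-interval $[u_{i-1},u_i]$, which contains exactly one singular crossing $t_i$ with $\dim\ker L_{\gamma(t_i)}\le m$, gives $|\phi(C_i)-\phi(C_{i-1})|\le m$. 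Hence $\phi(C_0),\dots,\phi(C_N)$ is a walk in $\mathbb{Z}$ with step size at most $m$, taking values in a set of cardinality $d$, where $d$ is the number of distinct components among the $C_i$. A short combinatorial argument -- a gap larger than $m$ between consecutive attained values would trap the walk on one side -- shows such a walk has range at most $m(d-1)$, whence $|\sfl(L\circ\gamma,I)|=|\phi(C_N)-\phi(C_0)|\le m(d-1)$, i.e.\ $d\ge |\sfl(L\circ\gamma,I)|/m+1$.

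The one non-routine step is the opening observation: it converts the single-parameter bifurcation result into a global potential, and is really where the full force of Theorem~\ref{thm:bif}(i) is used. The rest is book-keeping with (A) together with the combinatorial confinement estimate for integer walks appearing in (iii).
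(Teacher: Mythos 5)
Your proposal is correct, and for parts i) and ii) it follows essentially the paper's route: the paper also derives from Theorem \ref{thm:bif} i) that the spectral flow vanishes along admissible paths avoiding $B(\psi)$, and then uses (A) to build exactly your potential, in the form of an index $i(C)$ attached to each component of $\Lambda\setminus B(\psi)$ meeting $\Lambda\setminus\Sigma(L)$, so that $\sfl(L\circ\gamma)$ is the difference of the indices of the endpoint components; i) and ii) then follow as in your argument (both you and the paper really prove that the endpoints lie in distinct \emph{path} components, which is the intended reading of ``disconnected''). Where you genuinely diverge is iii): the paper repeatedly shortcuts the given path inside any component it visits twice, producing a path $\bar\gamma$ with the same endpoints (hence, by (A), the same spectral flow) that never revisits a component, and then applies Theorem \ref{thm:bif} ii) to $L\circ\bar\gamma$; you instead keep the original path, subdivide it at admissible times $u_i$ separating the singular crossings, bound each increment $\phi(C_i)-\phi(C_{i-1})=\sfl(L\circ\gamma,[u_{i-1},u_i])$ by $m$ (equivalently via the paper's Lemma \ref{sflred}, since each sub-interval contains a single singular point), and finish with the bounded-step integer-walk estimate. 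Your version buys a more self-contained argument: it avoids the path-modification construction and the attendant bookkeeping (in particular the fact that the shortcut is performed at parameter values where $L\circ\gamma$ is singular), at the cost of the small combinatorial lemma on walks, which you state and justify correctly; the paper's version is shorter once Theorem \ref{thm:bif} ii) is taken as a black box and is the template for Corollary \ref{cor:fam}, where the shortcut path is additionally made transversal to $\cS$. Both arguments use (A) in the same essential way, so there is no gap.
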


\begin{rem}
{\rm Let us recall that the Lebesgue covering dimension $\dim\Lambda$ of a topological space $\Lambda$ is the minimal value of $n\in\mathbb{N}$ such that every finite open cover of $\Lambda$ has a finite open refinement in which no point  belongs to more than $n+1$ elements. By Corollary 1, Theorem IV 4 of \cite {Hurewicz},  no subset of dimension strictly smaller than $n-1$ can disconnect a topological $n$-manifold. Therefore, it follows from Theorem \ref{thm:bif} that,  if the parameter space $\Lambda$ is a topological manifold of dimension $n,$ then the covering dimension of $B(\psi)$ is at least $n-1.$} \end{rem}
 
\begin{cor}\label{cor:fam} If $\Lambda$ is  a smooth manifold verifying the assumptions of  Theorem \ref{thm:fam} and  if $\Sigma(L)\subset \cS,$  where $\cS$ is a stratified submanifold  of  $\Lambda$ of  positive codimension,  e.g., if $L$ is smooth and transversal to the variety $\Sigma,$  then iii) holds irrespective of the assumption $ \Sigma(L) = B(\psi).$  The same is true if $ \Lambda $ is a topological  $n$-manifold  and  $\dim \Sigma (L) \setminus B(\psi) \leq n-2.$  \end {cor}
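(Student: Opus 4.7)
My plan is to reduce both hypotheses of the corollary to the single condition $\dim(\Sigma(L)\setminus B(\psi))\leq n-2$, and then to adapt the proof of Theorem~\ref{thm:fam}(iii) by a bypass construction that reroutes a path around the non-bifurcation singular points of $L$ while preserving admissibility and spectral flow. Hypothesis~(A) together with Theorem~\ref{thm:bif}(ii) will then deliver the component estimate for the original path.

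First I would verify the dimensional bound in both cases. Case~(b) gives it directly. In case~(a) with $L$ smooth and transversal to $\Sigma$, I would observe that a transversal crossing of the codimension-one stratum $\Sigma_1$ yields a spectral flow of $\pm 1$ on any sufficiently small parameter interval around it, so by Theorem~\ref{thm:bif}(i) that point is a bifurcation point; therefore $L^{-1}(\Sigma_1)\subset B(\psi)$, and the strata $L^{-1}(\Sigma_k)$ for $k\geq 2$ are of codimension at least three in $\Lambda$ by transversality, so $\Sigma(L)\setminus B(\psi)$ has dimension at most $n-3$. Next I would invoke the Hurewicz-type fact quoted in the Remark preceding the corollary to see that, inside every component $C$ of $\Lambda\setminus B(\psi)$, the subset $C\setminus\Sigma(L)=C\setminus(\Sigma(L)\setminus B(\psi))$ remains open, dense and path-connected; in particular any two points of $C$ can be joined by a path entirely contained in $C\setminus\Sigma(L)$.

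The core of the argument is a reduction procedure on admissible paths. Let $\gamma$ be an admissible path such that $L\circ\gamma$ has only isolated singular parameters, denote by $N(\gamma)$ the number of components of $\Lambda\setminus B(\psi)$ visited by $\gamma$, let $t_1<\cdots<t_k$ be the bifurcation parameters on $\gamma$, and let $C_0,\dots,C_k$ be the components containing the successive segments of $\gamma$. I call $\gamma$ \emph{reduced} if $C_0,\dots,C_k$ are pairwise distinct, in which case $N(\gamma)=k+1$. If $\gamma$ is not reduced I pick indices $i<j$ with $C_i=C_j$, choose non-singular parameters $s\in(t_i,t_{i+1})$ and $s'\in(t_j,t_{j+1})$, and invoke the previous step to join $\gamma(s)$ to $\gamma(s')$ by a path $\beta\subset C_i\setminus\Sigma(L)$. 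The concatenation $\tilde\gamma=\gamma|_{[0,s]}\cdot\beta\cdot\gamma|_{[s',1]}$ is an admissible path with the same endpoints as $\gamma$; by~(A) we have $\sfl(L\circ\tilde\gamma)=\sfl(L\circ\gamma)$; since $L\circ\beta$ is everywhere invertible, $L\circ\tilde\gamma$ still has isolated singular parameters; and $N(\tilde\gamma)\leq N(\gamma)$ because $\beta$ does not enter any new component. Iterating this finitely many times yields a reduced admissible path $\hat\gamma$ with $N(\hat\gamma)\leq N(\gamma)$ and $\sfl(L\circ\hat\gamma)=\sfl(L\circ\gamma)$; Theorem~\ref{thm:bif}(ii) applied to $\hat\gamma$ then bounds its number of bifurcation parameters below by $|\sfl(L\circ\gamma)|/m$, so that $N(\gamma)\geq N(\hat\gamma)\geq |\sfl(L\circ\gamma)|/m+1$.

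The principal obstacle is the construction of the bypass~$\beta$: it must avoid all of $\Sigma(L)$ and not merely $B(\psi)$, so that the reduced path retains the isolated-singularity property required for Theorem~\ref{thm:bif}(ii). This is precisely what forces the codimension-at-least-two bound on $\Sigma(L)\setminus B(\psi)$ established in the first step, and is the technical heart of the argument.
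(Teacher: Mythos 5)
Your case (b) (topological manifold, $\dim\Sigma(L)\setminus B(\psi)\le n-2$) and your treatment of the transversal example are essentially sound, and your bypass-and-reduce machinery is the same as in the proof of Theorem \ref{thm:fam} iii). The gap is in the first assertion of the corollary, which is stated for \emph{any} stratified submanifold $\cS\supset\Sigma(L)$ of positive codimension --- in particular codimension one --- with no smoothness or transversality assumed of the path of Hessians itself; the clause ``e.g., if $L$ is smooth and transversal to $\Sigma$'' is only an illustration of how the hypothesis may arise. Your plan reduces everything to the bound $\dim(\Sigma(L)\setminus B(\psi))\le n-2$, but that reduction uses transversality of $L$ in an essential way (it is what gives you $L^{-1}(\Sigma_1)\subset B(\psi)$ and codimension $\ge 3$ for the higher strata). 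Under the actual hypothesis, points of $\Sigma(L)$ sitting on a codimension-one $\cS$ need not be bifurcation points at all (the local spectral flow across them can vanish), so $\Sigma(L)\setminus B(\psi)$ may have dimension $n-1$ and may separate a component $C$ of $\Lambda\setminus B(\psi)$; a bypass $\beta\subset C\setminus\Sigma(L)$ then simply need not exist, and your construction stops.

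The point you flag as ``the technical heart'' --- that the bypass must avoid all of $\Sigma(L)$ --- is in fact an overconstraint, and removing it is exactly how the paper handles the smooth case. Theorem \ref{thm:bif} ii) only requires the composed path to have \emph{isolated} singular points; it does not require the bypass to miss $\Sigma(L)$. So inside the component $C$ one approximates the connecting path by a smooth path transversal to $\cS$ (density of transversality in a smooth manifold); since $\cS$ has positive codimension, a one-dimensional transversal path meets $\cS$, hence $\Sigma(L)$, in at most finitely many points. These extra singular points lie in $C\subset\Lambda\setminus B(\psi)$, are not bifurcation points, and contribute nothing to the count in the proof of Theorem \ref{thm:bif} ii), so the estimate $N(\gamma)\ge |\sfl(L\circ\gamma)|/m+1$ goes through verbatim. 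If you restructure your argument so that the bypass is only required to meet $\Sigma(L)$ finitely often (using transversality to $\cS$ in the smooth case, and avoidance via the Hurewicz dimension argument only in case (b)), the proof covers the corollary as stated.
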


The condition  (A) is satisfied if the Hessians  $L_\lambda$ are either essentially positive or negative definite.  However, we will use (A) in the case of a family of Hamiltonian systems, in which none of the Morse indices is finite. In this framework (A)  still holds by \eqref{murel} below.\\
The following proposition shows the relevance of the topology of the parameter space for the validity of (A).

\begin{prop}\label{prop:fingen}
Let $\Lambda$ be a connected space of finite type, i.e., all homology groups $H_k (\Lambda)$, $k\in\mathbb{N}$, are finitely generated. If $H_1(\Lambda;\mathbb{Q})=0$, then (A) holds for any continuous family of Fredholm $C^2$ functionals parametrized by $\Lambda$. 
\end{prop}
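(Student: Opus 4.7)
The plan is to exploit the fact that the spectral flow of a based loop of self-adjoint Fredholm operators is a homotopy invariant, so the association $\gamma\mapsto\sfl(L\circ\gamma,I)$ descends to a group homomorphism out of $\pi_1(\Lambda)$.

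First I would dispose of a trivial case: if no $\lambda\in\Lambda$ has $L_\lambda$ invertible, then there are no admissible paths and (A) holds vacuously. Otherwise, choose $\lambda_0\in\Lambda$ with $L_{\lambda_0}$ invertible and consider any closed admissible path $\gamma\colon I\to\Lambda$ based at $\lambda_0$. The composition $L\circ\gamma\colon I\to\Phi_S(H)$ is a loop whose two endpoints coincide at the invertible operator $L_{\lambda_0}$, so $\sfl(L\circ\gamma,I)$ is defined. A standard homotopy-invariance property of the spectral flow asserts that its value on such a loop is unchanged under any continuous deformation in $\Phi_S(H)$ which keeps the endpoints equal and invertible; the intermediate loops may pass through $\Sigma$ without affecting the argument, because only the basepoint needs to be invertible. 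Applied to a based homotopy $\gamma_t$ of loops in $\Lambda$, this produces a well-defined map
\[
\chi\colon\pi_1(\Lambda,\lambda_0)\longrightarrow\Z,\qquad [\gamma]\longmapsto\sfl(L\circ\gamma,I),
\]
which is a group homomorphism because the spectral flow is additive under concatenation of paths and changes sign under reversal.

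Next I would invoke Hurewicz together with the hypotheses on $\Lambda$. Since $\Z$ is abelian, $\chi$ factors through the abelianization $\pi_1(\Lambda,\lambda_0)^{\mathrm{ab}}\cong H_1(\Lambda;\Z)$. The finite-type assumption makes $H_1(\Lambda;\Z)$ finitely generated, and universal coefficients gives $H_1(\Lambda;\Q)=H_1(\Lambda;\Z)\otimes_\Z\Q$, so the vanishing of $H_1(\Lambda;\Q)$ forces $H_1(\Lambda;\Z)$ to have rank zero, i.e., to be a finite torsion group. Any homomorphism from a finite group to $\Z$ is zero, hence $\chi\equiv 0$, which means $\sfl(L\circ\gamma,I)=0$ for every closed admissible loop. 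The equivalent reformulation in (A) follows immediately: given two admissible paths $\gamma_1,\gamma_2$ with common endpoints, the concatenation $\gamma_1\ast\gamma_2^{-1}$ is an admissible loop, so additivity and the sign-change under reversal yield $\sfl(L\circ\gamma_1,I)-\sfl(L\circ\gamma_2,I)=\sfl(L\circ(\gamma_1\ast\gamma_2^{-1}),I)=0$.

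The main obstacle is the homotopy-invariance step used to define $\chi$, since the homotopies coming from $\pi_1(\Lambda,\lambda_0)$ produce deformations of $L\circ\gamma$ through loops whose interiors typically enter the singular variety $\Sigma$. However, this is precisely a well-established property of the spectral flow (recorded in the references \cite{FPR,Phillips} cited in the introduction): only the invertibility of the common endpoint is required, not of the intermediate values. Once this step is granted, the argument is purely algebraic-topological and the rest of the proof is routine.
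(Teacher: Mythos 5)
Your argument is essentially the paper's own proof: the spectral flow of admissible loops based at $\lambda_0$ induces, via homotopy invariance and additivity under concatenation, a homomorphism $\chi\colon\pi_1(\Lambda,\lambda_0)\to\Z$ that factors through $H_1(\Lambda;\Z)$, and the finite-type hypothesis together with $H_1(\Lambda;\Q)=0$ and universal coefficients forces $H_1(\Lambda;\Z)$ to be a finite torsion group, so $\chi\equiv 0$. The one step you gloss is the passage from loops based at $\lambda_0$ to arbitrary closed admissible paths: your final reformulation applies the vanishing to $\gamma_1\ast\gamma_2^{-1}$, a loop based at a point where $L$ is invertible but which need not be $\lambda_0$, whereas $\chi\equiv 0$ only covers loops through $\lambda_0$. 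The paper closes exactly this point with its property vi) (invariance of the spectral flow of closed paths under free homotopies, irrespective of invertibility of the intermediate basepoints); with the tools you already invoke it is equally quick to conjugate such a loop by a path $\delta$ joining $\lambda_0$ to its basepoint, since the contributions of $\delta$ and $\delta^{-1}$ cancel by additivity and sign reversal, so the spectral flow of the loop equals $\chi$ of a class in $\pi_1(\Lambda,\lambda_0)$ and hence vanishes.
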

Finally, let us mention that Theorem \ref{thm:fam} and Proposition \ref{prop:fingen} improve the main results of the recent work \cite{PW}. Moreover, the proof of the second assertion in Theorem \ref{thm:fam} simplifies the argument used in \cite{spinors}.


\section{The spectral flow} \label{sec:sfl}

The proof  of Theorem \ref{thm:bif} is based  on a  reduction process  to families of  functionals of a particular form  and some properties of the spectral flow which we are going to review in this section. Among  the  several constructions  of this invariant in the literature, we will follow the approach taken in \cite {FPR}  because it  leads almost immediately to the  reduction process mentioned above.\\
Let us recall that the open set  $\Phi_S(H)$ is not connected and that the position  with respect to $0$ of the essential spectrum of an operator determines to which connected component the operator belongs. The three connected components of $\Phi_S(H)$ are: the set of \textit{essentially positive operators} $\Phi^+_S(H)$ consisting of all operators in $\Phi_S(H)$ whose negative spectrum has only isolated eigenvalues of finite multiplicity; the set of \textit{essentially negative operators} $\Phi^-_S(H)$ defined in a corresponding way and the set of \textit{strongly indefinite operators} $\Phi^i_S(H)$ whose elements have infinite dimensional negative and positive spectral subspaces.\\
Let us  briefly recall from \cite{FPR} the definition of the spectral flow of  a path $L:I\rightarrow\Phi_S(H)$. In accordance with the notation in Theorem \ref{thm:fam}, $L$ will be called {\sl admissible} if its end-points are invertible. At first, if such an admissible path $L$ is a compact perturbation of a fixed self-adjoint Fredholm operator $T\in\Phi_S(H)$, i.e.,  $L_\lambda = T + K_\lambda$  where $K_\lambda$ is compact,  then the spectral flow is defined as  the relative Morse index of the end-points of the path; namely:

\begin{equation}  \label{murel}
\begin{aligned}
\sfl(L, [a,b])&=\mu_{rel}(L_a,L_b)\\&=\dim [ E^-(L_a)\cap E^+(L_b)]- \dim[E^-(L_b)\cap E^+(L_a)],
\end{aligned}\end{equation}
where $E^\pm$ denote the maximal positive ( resp.  negative)  space of a self-adjoint operator. In order to explain how \eqref{murel} can be extended to general admissible paths in $\Phi_S(H)$, we firstly assume that $L:I\rightarrow\Phi^i_S(H)$ is a path of strongly indefinite operators. We split $H$ into a product $H^+\times H^-$ with both $H^\pm$ isomorphic to $H$ and, writing $h=(x,y),$ we define $J(x,y) = x-y$. Clearly, $J^2=Id $ and the  spectrum of $J$ is  $\{ \pm 1\}$, both with infinite dimensional spectral subspace. Such a $J$ is called polarization in \cite{W}. Once a polarization is chosen, we say that a path $M\colon I\rightarrow GL(H)$ is a cogredient parametrix for a path  $L\colon I \rightarrow\Phi^i_S(H)$ with respect to $J,$ if 

\begin{align}\label{eq:parametrix} 
M_\lambda^\ast L_\lambda M_\lambda = J + K_\lambda,
\end{align}
with $K_\lambda$ compact and (necessarily) self-adjoint. It is shown in \cite{FPR} that every path in $\Phi^i_S(H)$ has a cogredient parametrix $M$ and that the definition
\begin{align}\label{sfl}
\sfl(L, [a,b])=\mu_{rel}(J+K_a,J+K_b)
\end{align}
does not depend on the choice of the cogredient  parametrix $M$.\\
The case of  paths $L:I\rightarrow\Phi^\pm_S(H)$ of essentially positive or essentially negative operators reduces to the one  in $\Phi^i_S(H\oplus H)$  by  taking direct sum  $\mp \Id $ on the second factor. Consequently, \eqref{sfl} extends to all admissible paths in $\Phi_S(H)$. It follows easily from \eqref{sfl}  that for paths in $\Phi^\pm_S(H)$ the spectral flow coincides with the difference of the Morse indices of $\pm L $ at the end points.\\  
The spectral flow is uniquely characterized by the following four properties (cf. \cite{CFP}): 

\begin{itemize}
\item[i)]{\it Normalization:} If  $L$ is a path of isomorphisms, then 
$\sfl(L, I)= 0.$\\
\item[ii)]{\it  Morse Index:}   If $H$ is finite dimensional,  then $\sfl(L, I)= \mu(L_a)-\mu(L_b).$\\ 
\item[iii)]{\it  Direct Sum Property:}  If $L_1$ and $L_2$ are 
admissible paths on Hilbert  
spaces $H_1$  and $H_2$ respectively, then 
$ \sfl(L_1\oplus L_2, I)=\sfl(L_1, I)+ \sfl(L_2, I).$ \\
\item[iv)]{\it Homotopy Invariance Property:}      Let 
$H\colon [0,1]\times  I \rightarrow\Phi_S(H)$  be a family such that for 
each  $s\in [0,1],$ the path  $ H_s\equiv H(s,\cdot)$ is admissible. 
Then $\sfl(H_s, I)$ is independent of $s$.  
\end{itemize}

From the above four properties it follows also that the spectral flow is additive under the concatenation of intervals, and  invariant under free homotopies of closed paths. Namely (cf.\cite{FPR}):

\begin{itemize} 
\item[v)] {\it Concatenation:}  If the path $L\colon  I  \rightarrow\Phi_S(H)$
is admissible  both on $[a,c]$ and on $[c,b],$ then  
$ \sfl(L, [a,b])=\sfl(L, [a,c])+ \sfl(L, [c,b]).$\\
\item[vi)] {\it  Homotopy of Closed Paths:}
Let  the family $H\colon [0,1]\times  I\to \fs$ be  such that $H(\cdot, a)= H(\cdot, b)$
and   $  H(0,a), H(1,a)$ are invertible. 
Then, irrespective of the invertibility of the operators $H(\cdot,a),$
\[\spf (H_0, I) = \spf(H_1,I).\]
\end{itemize}

\vspace{0.7cm}

We will now use the last two properties of the spectral flow in order to prove Proposition \ref{prop:fingen}.\\
Let $ \pi_1(\Lambda, \la)$  be the fundamental group of $\Lambda$ with base point $\la.$  Since the spectral flow is homotopy invariant and additive under concatenation of paths, it induces a homomorphism $\bar\sfl \colon \pi_1(\Lambda, \la) \ra \Z$ which necessarily sends the commutator subgroup $[\pi_1,\pi_1]$ to $0$. Therefore $\bar\sfl $  factors through the quotient $q \colon \pi_1(\Lambda, \la) \ra \pi_1(\Lambda, \la)/[\pi_1,\pi_1].$ Under the identification $ \pi_1(\Lambda, \la)/[\pi_1,\pi_1]\simeq H_1(\Lambda; \Z)$, we obtain a homomorphism  $ h\colon H_1(\Lambda; \Z)\ra \Z$ such that, denoting with $[\ga] $ the homotopy class of  a closed path $\ga$ based at $\la, $  we have that  $\sfl(L\circ\ga)=h\circ q([\ga]).$\\
Since $\Lambda$ is of finite type, there exists $d\in\mathbb{N}\cup\{0\}$ and prime numbers $p_1,\ldots,p_N$ such that $H_1(\Lambda;\mathbb{Z})\cong\mathbb{Z}^d\oplus\mathbb{Z}_{p_1}\oplus\cdots\oplus\mathbb{Z}_{p_N}$. It follows from the universal coefficient theorem that $d=0,$ because $H_1(\Lambda;\mathbb{Q})=0$ by assumption. We conclude that every element of $ H_1(\Lambda;\mathbb{Z})$ is of finite order which, on its turn, implies that $h\equiv 0.$  Thus $\sfl(L\circ\gamma)=0,$ for any closed path based at $\la$  and hence, by  vi), for any closed admissible path.  This yields the  desired conclusion.


\section{Proof of Theorem \ref{thm:bif} }\label{sec:bif}
First of all, let us notice that it is enough to prove the theorem in the  strongly indefinite case. Indeed, given any family of functionals  $\psi\colon I \times  U \rightarrow\mathbb{R}$ of Fredholm type, one can consider  the auxiliary family $\hat{\psi} \colon I \times U\times U\times U\rightarrow\mathbb{R}$ defined  by

\[\hat{\psi}(\lambda, w, u, v)\equiv \psi(\lambda, u) +
 \frac{1}{2}\| w\| ^2 -\frac{1}{2}\| v\| ^2.\]
Clearly, the bifurcation points of $\psi$ are the same as those of $\hat{\psi}$. Moreover, by the direct sum property,  the spectral flows of the Hessians along the trivial branch of critical points of $\psi$ and $\hat{\psi}$ are the same. But the Hessians of $\hat{\psi}$ are strongly indefinite.
Henceforth  we assume that $\psi$ is strongly indefinite and do a further reduction of the problem.\\ 
We choose a cogredient parametrix $M$ for the path $L$ of Hessians of $\psi$ and set \[\tilde\psi(\lambda,h)\equiv \psi\left(\lambda,M_\lambda (h)\right), \  \ \tilde f(\lambda,h) = M_\lambda^*f(\lambda,M_\lambda  (h)), \ \  \tilde L_\lambda =M_\lambda^*L_\lambda  M_\lambda.\]
Then  we have for all  $(\lambda,h)\in I\times H$
\begin{equation}
\nabla_h\tilde {\psi}(\lambda,h)=\tilde f(\lambda,h)\,
\quad \text{and}\quad
 D_h\tilde f(\lambda,h)=\tilde L _\lambda h.
\end{equation}   
Since $\{M_\lambda\}_{\lambda\in I}$ and $\{M_\lambda^\ast\}_{\lambda\in I}$ are paths of invertible operators, it is clear that nontrivial solutions of the equation $f(\lambda,h)=0$  correspond via $M_\lambda$  with nontrivial solutions of  $\tilde f(\lambda,h)=0$. Therefore, the bifurcations of critical points of  $\psi$ and $\tilde\psi$ arise at the same values of the parameter $\lambda$. Also the crossing points of $L$ and $\tilde L$ with each stratum $\Sigma_k$ are the same.  Moreover, it follows from the homotopy invariance property iv) that the spectral flows of $L$ and $\tilde L= M^\ast LM$ coincide. Indeed, the homotopy  $H(\lambda,t)= M^\ast_{t\lambda} L_\lambda M_{t\lambda}$ shows that $ \sfl( \tilde L, I) =\sfl(M_a^\ast LM_a)$. Since $GL(H)$ is connected, there exists a path of invertible operators from $M_a$ to $\Id$ and we obtain that

 \[ \sfl( \tilde L, I)=\sfl(M_a^\ast LM_a)=\sfl(L,I).\]
Consequently, by possibly replacing $\psi\colon I\times U\rightarrow\mathbb{R}$ with $\bar\psi\colon I\times U\rightarrow\mathbb{R}$, we may  assume that for each parameter $\lambda\in I$, $L_\lambda=J+K_\lambda $ with $K_\lambda$ compact and symmetric, which we will do from now on.\\
We choose Hilbert bases $\{e_k^+\}_{k=1\dots  \infty}$ and $\{e_k^- \}_{k=1\dots\infty}$ for $H^+$ and $H^-$, respectively. Let $H_n$ be the subspace of $H$ spanned by $\{e_k^{\pm}|1\leq k\leq 
n\}.$ Then $J $ commutes with the orthogonal projection $P_n$ of $H$ onto  $H_n$ and hence $J(H_n)=H_n$, $J(H_n^\perp)=H_n^\perp.$  Namely, the pair $H_n, H_n^\perp$  reduces $J$. Notice also that the signature of the restriction  of $J$  to $H_n$ is zero.\\
It is shown in \cite[Lemma 4]{CFP} that there exists $n\in\mathbb{N}$ such that, for all $\lambda\in I,$

\[(\Id-P_n)L_\lambda:H^\perp_n\rightarrow H^\perp_n\]
is invertible, and for $0\leq t\leq 1$ and $\lambda\in\{a,b\}$,

\[tL_\lambda+(1-t)[(\Id-P_n)L_\lambda(\Id-P_n)+P_nL_\lambda P_n]\]
is invertible, too. Hence, if we denote 

\[L^n_\lambda=P_nL_\lambda\mid_{H_n}:H_n\rightarrow H_n\quad \text{and}\quad\tilde{L}^n_\lambda=(\Id-P_n)L_\lambda\mid_{H^\perp_n}H^\perp_n\rightarrow H^\perp_n,\] 
we get from iv), iii) and i)

\[\sfl(L,I)=\sfl(L^n\oplus\tilde{L}^n,I)= \sfl (L^n,I)+ \sfl(\tilde L^n, I)=\sfl(L^n,I).\]
Finally, by using ii), we obtain:

\begin{prop}\label{sflowfin}
If $L\colon I\rightarrow \mathcal L (H)$ and $L^n\colon I \to \mathcal L (H_n)$ are as above, then there exists $n_0$ such that 
\[\sfl(L,I) = \sfl(L^n, I) =  \mu(L^n_a) - \mu(L^n_b), \   \text{for all}\ n\geq n_0.\]
\end{prop}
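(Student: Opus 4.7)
The plan is to observe that the proposition merely collects results already assembled in the paragraphs preceding it, and adds a single invocation of property (ii) of the spectral flow. The first equality $\sfl(L,I)=\sfl(L^n,I)$ is exactly the content of the chain of equalities displayed immediately above the statement, so nothing new needs to be proved for that half. The second equality $\sfl(L^n,I)=\mu(L^n_a)-\mu(L^n_b)$ is a direct application of the Morse index property (ii) to the finite-dimensional path $L^n\colon I\to \mathcal L(H_n)$, provided we know that $L^n_a$ and $L^n_b$ are invertible.

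To verify the endpoint invertibility we reuse the threshold $n_0$ already supplied by \cite[Lemma 4]{CFP}: for $n\geq n_0$ and $\lambda\in\{a,b\}$, the $t=0$ operator
\[
(\Id-P_n)L_\lambda(\Id-P_n)+P_nL_\lambda P_n
\]
is invertible on $H$. This operator is block diagonal with respect to the orthogonal splitting $H=H_n^\perp\oplus H_n$, with blocks $\tilde L^n_\lambda$ and $L^n_\lambda$ respectively, so its invertibility forces each block to be invertible. Since $\tilde L^n_\lambda$ is already known to be invertible for every $\lambda\in I$, we conclude that $L^n_a$ and $L^n_b$ are invertible in $\mathcal L(H_n)$. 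The path $L^n$ is therefore admissible on the finite-dimensional space $H_n$, and property (ii) yields $\sfl(L^n,I)=\mu(L^n_a)-\mu(L^n_b)$, completing the proof.

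There is essentially no obstacle beyond this bookkeeping: the heavy lifting (choosing a cogredient parametrix, reducing to $L_\lambda=J+K_\lambda$, splitting off the invertible tail $\tilde L^n$, and applying homotopy invariance, direct sum, and normalization) has already been carried out in the preceding paragraph. The value of $n_0$ in the proposition is precisely the threshold produced by \cite[Lemma 4]{CFP}, and no further enlargement is required.
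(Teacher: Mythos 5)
Your proposal is correct and takes essentially the same route as the paper: the first equality is the displayed chain obtained from properties iv), iii) and i), and the second is the Morse index property ii) applied to the finite-dimensional path $L^n$, with the threshold $n_0$ coming from \cite[Lemma 4]{CFP}. Your explicit check that the $t=0$ block-diagonal operator's invertibility at $\lambda\in\{a,b\}$ forces $L^n_a$ and $L^n_b$ to be invertible is a small detail the paper leaves tacit, and it is argued correctly.
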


Before turning to the proof of Theorem \ref{thm:bif} let us prove a version of the Lyapunov-Schmidt Reduction for critical points that we will use below. This later is a modification of the reduction proved  in \cite{FPR} adapted to the class of functionals we are working with. 

\begin{lemma}\label{lsreduction}  
Let $\psi\colon I \times  U\rightarrow\mathbb{R}$ be a continuous one-parameter family of $C^2$ functionals. Let $f(\lambda,h)\equiv\nabla_h\psi(\lambda,h).$ 
Assume that  $f(\lambda,0)=0$ for all $\lambda\in I.$ Suppose  that  there is an orthogonal  splitting $ H=X\times Y $, where $\dim X <\infty $ and such that,  writing  $h=(x,y) $ and  $f(\lambda,h)= ( f_1(\lambda,x,y), f_2(\lambda,x,y)),$  we have that  $D_y f_2(\lambda,0,0)\colon Y\rightarrow Y$  is invertible  for all $\lambda\in I.$ Then:
\begin{itemize}
\item[i)]  There is an open ball $B= B(0, \delta)\subset X$ and a continuous family of $C^1$ maps  
$g\colon I\times B\rightarrow Y$, such that $g(\lambda,0)=0$ for all $\lambda\in 
I,$ and  
\begin{align}\label{eq1}
f_2(\lambda, x, g(\lambda,x) )=0\,\,\text{ for all  }\,\,(\lambda,x)\in I\times B.
\end{align}
\item[ii)] If  the mapping $\bar f\colon I\times B\rightarrow X$ and the functional $\bar \psi\colon I\times B\rightarrow\mathbb{R}$ are  defined by \[\bar f(\lambda,x)\equiv f_1\left(\lambda,x, g(\lambda,x)\right) \ \hand \ 
\bar \psi(\lambda,x)\equiv\psi\left(\lambda,x, g(\lambda,x)\right)\ \text{respectively,} \]   
then $\bar\psi$ is a continuous family of $C^2$ functionals  on $B$ and 
\begin{equation}
\label{grad}
\nabla_x\bar \psi(\lambda,x)=\bar f(\lambda,x)\,\,\text{ for all }\,\, 
(\lambda,x) \in  I\times B. 
\end{equation}
\end{itemize}
\end{lemma}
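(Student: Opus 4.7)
\textbf{Proof plan for Lemma \ref{lsreduction}.}

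The plan is to derive (i) from a parametric implicit function theorem adapted to the fact that $\psi$ is only continuous — not necessarily differentiable — in $\lambda$, and then to deduce (ii) by the chain rule combined with the regularity of $g$ in the variable $x$. The classical implicit function theorem cannot be applied directly as a black box because it typically requires joint $C^1$ dependence of $f_2$ in $(\lambda,x,y)$; the point of the proof is to split the argument into a fixed point construction (which only needs continuity in $\lambda$) and a differentiation step carried out for each fixed $\lambda$.

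For (i), set $A(\lambda)=D_y f_2(\lambda,0,0)$. By hypothesis $A(\lambda)$ is an isomorphism of $Y$ for every $\lambda$, and by the assumption that $(\lambda,h)\mapsto Df(\lambda,h)$ is continuous, $\lambda\mapsto A(\lambda)^{-1}$ is norm continuous on the compact interval $I$. Define the fixed point operator
\[
T_{\lambda,x}(y)\;=\;y\;-\;A(\lambda)^{-1}\,f_2(\lambda,x,y).
\]
Then $T_{\lambda,x}(0)=-A(\lambda)^{-1}f_2(\lambda,x,0)$, which is small for $\|x\|$ small uniformly in $\lambda$ by continuity and compactness, and
\[
D_y T_{\lambda,x}(y)\;=\;A(\lambda)^{-1}\bigl(A(\lambda)-D_y f_2(\lambda,x,y)\bigr),
\]
which vanishes on $I\times\{0\}\times\{0\}$ and hence, again by uniform continuity on the compact set $I\times\{(0,0)\}$, has operator norm at most $1/2$ on a uniform ball $\bar B_\eta(0)\subset Y$ once $\|x\|$ is small. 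Choosing $\delta>0$ accordingly, $T_{\lambda,x}$ is a uniform contraction of $\bar B_\eta(0)$ for all $(\lambda,x)\in I\times B$, with $B=B(0,\delta)\subset X$. Banach's principle gives a unique fixed point $g(\lambda,x)$, and the standard parameter-continuity estimate for contractions yields joint continuity of $g$ in $(\lambda,x)$. By construction $g(\lambda,0)=0$. For fixed $\lambda$, the classical implicit function theorem applied to the $C^1$ map $(x,y)\mapsto f_2(\lambda,x,y)$ (with $D_y f_2(\lambda,x,g(\lambda,x))$ invertible near $(0,0)$, after possibly shrinking $\delta$) shows $x\mapsto g(\lambda,x)$ is $C^1$ and
\[
D_x g(\lambda,x)\;=\;-\bigl[D_y f_2(\lambda,x,g(\lambda,x))\bigr]^{-1}\,D_x f_2(\lambda,x,g(\lambda,x)).
\]
Since $Df$ is jointly continuous in $(\lambda,h)$ and $g$ is jointly continuous, this formula shows $(\lambda,x)\mapsto D_x g(\lambda,x)$ is continuous, i.e. $g$ is a continuous family of $C^1$ maps.

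For (ii), fix $\lambda$. Then $\bar\psi_\lambda(x)=\psi_\lambda(x,g(\lambda,x))$ is a composition of $C^2$ maps, hence $C^2$. Computing the differential with the chain rule and using the defining equation \eqref{eq1} gives
\[
d\bar\psi_\lambda(x)\,v\;=\;\langle f_1(\lambda,x,g(\lambda,x)),v\rangle\;+\;\langle f_2(\lambda,x,g(\lambda,x)),D_x g(\lambda,x)v\rangle\;=\;\langle \bar f(\lambda,x),v\rangle,
\]
which proves \eqref{grad}. Differentiating once more, one obtains
\[
d^2\bar\psi_\lambda(x)\;=\;D_x f_1(\lambda,x,g(\lambda,x))\;+\;D_y f_1(\lambda,x,g(\lambda,x))\,D_x g(\lambda,x),
\]
and both summands are jointly continuous in $(\lambda,x)$ by the regularity of $g$ already established and the continuity of $Df$ in $(\lambda,h)$. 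Joint continuity of $d\bar\psi_\lambda(x)$ in $(\lambda,x)$ is clear from the formula for $\bar f$. Therefore $\bar\psi$ is a continuous family of $C^2$ functionals on $B$.

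The main obstacle is the asymmetry between the $\lambda$-regularity (merely continuous) and the $h$-regularity ($C^1$ for the gradient). The key point is to separate the existence/uniqueness and continuity in $\lambda$, handled by the contraction argument with uniform constants on the compact interval $I$, from the $C^1$-regularity in $x$, handled fiberwise by the classical implicit function theorem, and then to glue the conclusions via the explicit formula for $D_x g$.
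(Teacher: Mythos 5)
Your proof is correct and follows essentially the same route as the paper's: a uniform contraction argument (your $T_{\lambda,x}$ is just the paper's map $k(\lambda,x,y)=y-f_2(\lambda,x,y)$ after normalizing $D_yf_2(\lambda,0,0)$ to the identity), a fiberwise application of the classical implicit function theorem together with the implicit-derivative formula to get joint continuity of $D_xg$, and the chain rule with $f_2(\lambda,x,g(\lambda,x))=0$ for part (ii). One cosmetic remark: $g_\lambda$ is only $C^1$, so $\bar\psi_\lambda$ is not literally a ``composition of $C^2$ maps''; this is harmless, since, exactly as in the paper, the $C^2$-regularity follows from \eqref{grad} and the $C^1$-regularity of $\bar f_\lambda$ that your displayed formula for $D_x\bar f$ exhibits.
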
 
 
\begin{proof}
Composing  $f_2$ on the right with $D_y^{-1} f_2(\lambda,0,0)$, we obtain a map  $ \bar f_2 \colon I\times X\times Y \rightarrow Y$ such that $ D_y\bar f_2(\lambda,0,0)=\Id$ for all $\lambda\in I$. Clearly, there exists a $g$ verifying \eqref{eq1} for $f_2$ if and only if the same holds for $\bar f_2.$ Therefore, we can assume that 
 $D_y f_2(\lambda,0,0)=\Id$. Assuming this,  the derivative in the $y$ direction of the map  $k(\lambda,x,y)= y-f_2(\lambda, x,y)$ vanishes at $(\lambda,0,0)$ for all $\lambda$. Being  $D_y \,k(\lambda,x,y)$ continuous, using compactness of $I,$ we can find a  product  ball $ (B'=B(0,\epsilon))\times (B''= B(0,\epsilon)) \subset X\times Y $ such that  $ \| D_y k(\lambda, x,y)\| \leq \frac{1}{2}$  on $I\times B'\times B''$.  From this, we obtain that

\begin{align}\label{eq2}
\| k(\lambda,x,y) -k(\lambda,x,y')\| \leq \frac{1}{2} \|y-y'\|,
\end{align}
for any  $(\lambda,x) \in I\times B'$  and any $y,\, y' \in B''.$ Taking  a $\delta< \epsilon$ such that  $\|k(\la,x,0)\| = \|f_2(\lambda,x,0)\|\leq \frac{\epsilon}{2}$ on $B=B(0,\delta)$ and using \eqref{eq2} together with  $k(\la,0,0)=0,$  we obtain that $\| k(\lambda,x,y)\|< \epsilon,$ on  $I\times  B\times B''.$\\
Therefore, for any $(\lambda,x) \in I\times B$,  the map $k_{(\lambda,x)} (y):=  k(\lambda,x,y) $ is a strict contraction of $B''$ into itself. Defining $g(\lambda,x)$ to be the unique fixed point of  $k_{(\lambda,x)} \colon B'' \rightarrow B'' $,  the equation  \eqref{eq1} holds true. The continuity of $g$ follows from the fact that fixed points of continuous families of contractions depend continuously on the parameter.  By the implicit function theorem for $C^1$ maps, each $g_\lambda\equiv g(\lambda,\cdot) $ is differentiable. That the map $ (\lambda,x)\rightarrow Dg_\lambda(x) $ is continuous is a simple consequence of the formula for the derivative of an implicit function. Finally, a straightforward application of the chain rule gives \eqref{grad}, from which it also follows that $\bar \psi$  is a continuous family of $C^2$ functionals.
\end{proof}

Let us take as  $X,Y$ in Lemma \ref{lsreduction}  the pair  $H_n, H_n^\perp.$ Then we have a splitting   $f=(f^n_1, f^n_2),$  where 

\[f^n_1(\lambda, x,y) = P_n f(\lambda,x,y)\,   \hand\,  f^n_2(\lambda, x,y) =  (\Id -P_n)f(\lambda,x,y).\]
Moreover,  we have that $ D_y f^n_2 (0,0) = \tilde L^ n$ is an isomorphism   for  $n$  large  enough. Thus,  by the previous lemma we obtain a finite dimensional reduction  $\bar \psi^n \colon I\times B \rightarrow \R,$  $\bar f^n =\nabla_x \bar \psi^n \colon I \times B \rightarrow H_n $ for some ball $B \subset H_n$ centered at $0.$   

\begin{prop}\label{samemorse}
Let  $l^n_\lambda = D_x \bar f^n(\lambda, 0)$  be the Hessian of $\bar \psi^n_\lambda$  at $0. $  For $i =a,b $ and $n$ big enough, we have that  $0$ is a non-degenerate critical point of $\bar\psi_i^n$ and $$ \mu (\bar \psi^n_i, 0) :=\mu (l^n_i)= \mu (L^n_i).$$  
\end{prop}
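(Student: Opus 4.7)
The plan is to compute the Hessian $l^n_\lambda$ of $\bar\psi^n_\lambda$ at $0$ via the chain rule applied to the Lyapunov-Schmidt reduction, recognize it as the Schur complement of a block decomposition of $L_\lambda$ along $H = H_n \oplus H_n^\perp$, and then use the invertibility statement from \cite[Lemma 4]{CFP} (already invoked in the derivation of Proposition \ref{sflowfin}) to connect $l^n_i$ with $L^n_i$ at $i \in \{a,b\}$ by a continuous path of invertible self-adjoint operators on the finite-dimensional space $H_n$.

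First I would write $L_\lambda$ in block form with respect to $H = H_n \oplus H_n^\perp$ as
\[
L_\lambda = \begin{pmatrix} A_\lambda & B_\lambda \\ B^*_\lambda & D_\lambda \end{pmatrix},
\]
where $A_\lambda = L^n_\lambda$, $D_\lambda = \tilde L^n_\lambda$, and $B_\lambda = P_n L_\lambda|_{H_n^\perp}$. Differentiating the identity $f^n_2(\lambda, x, g(\lambda, x)) \equiv 0$ at $x = 0$ (using $g(\lambda, 0) = 0$) yields $D_x g(\lambda, 0) = -D_\lambda^{-1} B^*_\lambda$, and the chain rule applied to $\bar f^n(\lambda, x) = f^n_1(\lambda, x, g(\lambda, x))$ then gives the Schur complement formula
\[
l^n_\lambda = A_\lambda - B_\lambda D_\lambda^{-1} B^*_\lambda.
\]

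Next, for $i \in \{a, b\}$, the invertibility statement of \cite[Lemma 4]{CFP} used in the proof of Proposition \ref{sflowfin} asserts that, for $n$ sufficiently large, the family
\[
t L_i + (1-t)\bigl[(I - P_n) L_i (I - P_n) + P_n L_i P_n\bigr] = \begin{pmatrix} A_i & tB_i \\ tB^*_i & D_i \end{pmatrix}
\]
consists of invertible self-adjoint operators for every $t \in [0,1]$. Since $D_i$ is invertible, the standard Schur complement identity converts this into the invertibility of $A_i - t^2 B_i D_i^{-1} B^*_i$ for all $t \in [0,1]$. Reparametrizing by $s = t^2$ we obtain a continuous path of invertible self-adjoint operators on the finite-dimensional space $H_n$ joining $A_i = L^n_i$ at $s=0$ to $l^n_i$ at $s=1$. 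Along such a path no eigenvalue crosses $0$, so the Morse index is constant and $\mu(l^n_i) = \mu(L^n_i)$; in particular $l^n_i$ is invertible and $0$ is a non-degenerate critical point of $\bar\psi^n_i$. The only mildly subtle point is the $t \leftrightarrow t^2$ reparametrization that rewrites the CFP homotopy as a homotopy of Schur complements on $H_n$; once this is recognized, the conclusion follows immediately from finite-dimensional spectral continuity.
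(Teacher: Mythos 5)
Your proof is correct, but it takes a genuinely different route from the paper's. Both arguments begin with the same implicit differentiation, $D_xg(\lambda,0)=-(\tilde L^n_\lambda)^{-1}(\Id-P_n)L_\lambda|_{H_n}$, so that $l^n_i$ is the Schur complement $L^n_i-B_iD_i^{-1}B_i^{\ast}$ (the paper writes this as $l^n_i=L^n_i+P_nL_iC^n_i$ with $C^n_i=Dg_i(0)$; its displayed formula for $C^n_i$ omits a minus sign, which is immaterial for its argument). From there the paper argues asymptotically: using the reduced form $L_i=J+K_i$ with $K_i$ compact and $JP_n=P_nJ$, it shows $\|C^n_i\|\to 0$, hence $l^n_i-L^n_i\to 0$ in norm, and concludes that the Morse indices agree for $n$ sufficiently large by norm-stability of the index of the invertible $L^n_i$. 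You instead argue exactly, for any fixed $n$ for which \cite[Lemma 4]{CFP} applies: the invertibility of $tL_i+(1-t)\bigl[(\Id-P_n)L_i(\Id-P_n)+P_nL_iP_n\bigr]$, which in block form is $\begin{pmatrix}A_i&tB_i\\ tB_i^{\ast}&D_i\end{pmatrix}$, transfers through the standard Schur factorization (valid since $D_i=\tilde L^n_i$ is invertible by the first part of that lemma) to invertibility of $A_i-t^2B_iD_i^{-1}B_i^{\ast}$, and after the reparametrization $s=t^2$ this gives a path of invertible symmetric operators on the finite-dimensional space $H_n$ joining $L^n_i$ to $l^n_i$, whence equal Morse indices and nondegeneracy. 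The trade-off: your argument recycles the second invertibility statement of \cite[Lemma 4]{CFP}, which the paper invokes only for Proposition \ref{sflowfin}, needs neither the compactness of $K_i$ nor any further passage to the limit in $n$, and yields the conclusion for exactly the same $n$ as in Proposition \ref{sflowfin}; the paper's norm-estimate argument is more elementary and makes explicit the mechanism $C^n_i\to 0$ (because $(\Id-P_n)K_i\to 0$), at the price of a less precise ``for $n$ big enough.''
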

 
\begin{proof} 
For $i=a,b$,  let $C^n_i = D g_i(0).$   From equation  \eqref{eq1} we obtain by implicit differentiation    
\begin{equation}\label{equ2}  
C^n_i  =  [D_y f^n_2(i,0,0)]^{-1}   D_x f^n_2(i,0,0)= (\tilde L^n_i)^{-1} (\Id -P_n) {L_i} _{| H_n}. 
\end{equation}
For $n$ big enough,   $\tilde L^n_i$  becomes as  close  as we wish to the isomorphism  $L_i$  which make the norms of its inverses uniformly bounded.  If we now use that $L_i=J+K_i$ and that $J$ commutes with $P_n$, we infer
\[C^n_i=(\tilde L^n_i)^{-1} (\Id -P_n)( {J+K_i}) _{| H_n}=(\tilde L^n_i)^{-1} (\Id -P_n) {K_i} _{| H_n}.\]
Now, since $(\Id -P_n) {K_i}$ converges to $0$ in norm, we obtain that $\|C^n_i\|\rightarrow 0$, $n\rightarrow\infty$. Observing that
 
\[l^n_i=P_n L_i(Id+C^n_i)= L^n_i + P_nL_iC^n_i,\]
we see that $ l^n_i -L^n_i \to 0$  in norm which implies that the Morse index of $l^n_i$ is defined and coincides with that of $L^n_i$ for sufficiently large $n$.
\end{proof} 

By construction,  the family  $\bar\psi$ and  the family $\psi$ have the same bifurcation points. Now, part $ i) $ of Theorem \ref{thm:bif}   follows from the propositions \ref{sflowfin}, \ref{samemorse} and:  

\begin{prop}\label{biffin}
Let  $U\subset \mathbb{R}^N $ be a  neighborhood of $0$ and let $\psi\colon I \times U \rightarrow\mathbb{R}$ be a  continuous family of  $C^2$  functionals such that $0$ is a critical point of  $\psi _\lambda \equiv \psi(\lambda,\cdot)$ for each $\lambda\in I.$ If the Hessians $l_a$ of $ \psi_a $ and  $l_b$  of $ \psi_b$ are invertible and $\mu(l_a)\neq\mu(l_b),$  then  the interval  $(a,b)$ contains at least one point  of bifurcation of critical points  of the family  $\psi_\lambda$  from the trivial branch.
\end{prop}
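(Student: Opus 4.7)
The plan is to argue by contradiction via the homotopy invariance of the Conley index. Assume that the open interval $(a,b)$ contains no bifurcation point of $\psi_\lambda$ from the trivial branch. I would then construct a compact isolating neighborhood $W$ of $0 \in \mathbb{R}^N$ valid uniformly in $\lambda \in [a,b]$, apply Conley's continuation principle to deduce that the Conley index of $\{0\}$ does not depend on $\lambda$, and close the argument by computing the indices at the two endpoints.

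To produce the uniform isolating neighborhood, note first that the invertibility of $l_a$ and $l_b$ makes $0$ an isolated critical point of $\psi_a$ and $\psi_b$. At any interior $\lambda_\ast \in (a,b)$, the non-bifurcation assumption yields a product neighborhood $J_{\lambda_\ast} \times V_{\lambda_\ast}$ of $(\lambda_\ast, 0)$ in $[a,b] \times U$ inside which the only zero of $\nabla \psi_\lambda(x)$ is $x=0$. Covering the compact interval $[a,b]$ by finitely many such products and intersecting the corresponding $V_\bullet$'s, I obtain a common open set $V$; choose $W$ to be a closed ball in $V$ centered at $0$.

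Next, consider the local flow $\varphi_\lambda$ generated by $-\nabla \psi_\lambda$ on $W$. Joint continuity of $\nabla \psi$ and its $C^1$-dependence in $x$ make $\varphi_\lambda$ depend continuously on $\lambda$ in the sense required for continuation. A standard gradient-flow argument then shows that any full orbit staying in $W$ must have its $\alpha$- and $\omega$-limit sets among the critical points in $W$; since $0$ is the only such critical point and $\psi_\lambda$ is strictly decreasing along non-equilibrium orbits, such an orbit is necessarily $\{0\}$. Hence $W$ is a compact isolating neighborhood with maximal invariant set $\{0\}$ for \emph{every} $\lambda \in [a,b]$, and Conley's continuation principle yields that the pointed homotopy type of the Conley index $h(\{0\}, \varphi_\lambda)$ is constant on $[a,b]$.

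Finally, at $\lambda = a$ and $\lambda = b$ the critical point $0$ is non-degenerate, so its Conley index is the pointed sphere whose dimension equals the Morse index: $h(\{0\}, \varphi_a) \simeq S^{\mu(l_a)}$ and $h(\{0\}, \varphi_b) \simeq S^{\mu(l_b)}$. Homotopy invariance therefore forces $S^{\mu(l_a)} \simeq S^{\mu(l_b)}$, contradicting $\mu(l_a) \neq \mu(l_b)$ since pointed spheres of distinct dimensions have non-isomorphic reduced homology. The main obstacle is to secure the uniform isolating neighborhood and to verify that Conley's continuation applies to families that are only continuous (not smooth) in the parameter; once that regularity point is settled, the endpoint computation and the sphere-vs-sphere contradiction are classical.
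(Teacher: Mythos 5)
Your proposal is correct and follows essentially the same route as the paper's proof: argue by contradiction, produce (via compactness of $[a,b]$, non-bifurcation at interior points and the implicit function theorem at the invertible endpoints) a uniform isolating neighborhood in which the gradient structure forces the maximal invariant set to be $\{0\}$, invoke the continuation property of the Conley index for the continuous family of flows (the paper does this through Bartsch's Conley index over a space, applied to the flow on $I\times X$, which settles your regularity concern), and conclude from $h(\{0\},\varphi_a)\simeq S^{\mu(l_a)}$, $h(\{0\},\varphi_b)\simeq S^{\mu(l_b)}$ that $\mu(l_a)=\mu(l_b)$, a contradiction.
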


\begin{proof}
In  order to show that $\mu(l_a) \neq \mu(l_b)$ entails bifurcation, we will use  the continuation property of the Conley index of an isolated invariant set of a flow.  We will shortly recall below some basic facts about the Conley index. Here we follow essentially the presentation of the Conley index in \cite{Ba} and \cite{Ba1}).\\
A \emph {flow} on a locally compact metric space $X$ is a continuous map $\Phi \colon \mathbb{R} \times X \rightarrow X$ such that $\Phi(0,x) =x $ and $\Phi( t+s,x) = \Phi \left(t, \Phi(s,x)\right)$. As usual we will use subindices in order to denote the partial maps. We will denote by $ \gamma(x)  = \Phi_x(\mathbb{R})$ the orbit of a point $x$ and by $\alpha(x)$, $\omega(x)$ the alpha and omega limit respectively of the orbit passing through $x$. Given a subset $A$ of $X$ we will denote by  $I(A)$ the maximal invariant subset of $A$ , i.e.,  the union of all orbits  contained in $A$. A compact subset $N$ of $X$ is an \emph{ isolating neighborhood} if  any orbit $\gamma(x)$  contained in $N$ does not intersect $\partial N$. If $N$ is an isolating neighborhood,  the maximal invariant subset $I(N)$ of $N$ is a compact subset of  the interior of $N.$  An invariant subset $S$  of $X$ of  the form   $S= I(N)$ with $N$ an isolating neighborhood is called an \emph{isolated invariant set}.  An \emph{index pair} for an isolated invariant  set  $S$ is a pair $(N, E)$  of compact subsets of $X$ such that the closure of  $N\setminus E$ is an isolating neighborhood of $S$. The set $E\subset N,$ called the \emph{exit set}, is positively invariant  in $N$ and such that any orbit exiting $N$ in forward time must pass through $E.$ Every isolating neighborhood of the set  $S$ contains an index pair $(N,E)$ for $S$.\\
By definition, the Conley index $h(S)$ of an isolated invariant set $S$ is the homotopy type  of the pointed space $N/E$  obtained from $N$ by identifying all points in $E$ to a  base point.   That the (pointed) homotopy type of $N/E$  is independent of the choice of the index pair is one of the main results of Conley's theory. The invariance of the  Conley index  under conjugation of flows by homeomorphisms  allows to compute the index of an isolated hyperbolic equilibrium point $x_0$ of a complete  $C^1$- vector field $ f $  considered as an isolated invariant set of the associated flow. It turns out that $h(\{x_0\})$   is the homotopy type of a pointed sphere  $S^m$, where $m$  equals the dimension of the unstable manifold of  $x_0$  or, what is the same,  the number of characteristic exponents of $x_0$  with positive real part (see \cite{C} Chapter I , Section 4.3). In particular, if $x_0$ is a non-degenerate equilibrium point of a $C^2$ functional $\psi$, then  the Conley index of $x_0$ considered as an isolated critical  point of the flow associated to $  -\nabla \psi $  is the homotopy type of the sphere  $S^{\mu},$ where $\mu= \mu(\psi,x_0)$ is the Morse index of the critical point.  We will use a particular case of the continuation principle  for the Conley index in the form presented in \cite{Ba1} (cf. also \cite{Ba}).\\ 
A continuous map $\Phi \colon I \times \mathbb{R} \times X \rightarrow X $ such that each  $\Phi_\lambda$ is a flow induces a flow $\bar \Phi$ on $I\times X$  defined  by $\bar\Phi (t, \lambda,x) = (\lambda, \Phi(\lambda,t,x)).$ That  $\bar\Phi$ is a flow is clear from the definition. Moreover, the fibers of the projection $\pi \colon I\times X \rightarrow I $ are invariant  under  $\bar\Phi $,   which acts  on the  fiber   $ \{\lambda \} \times X $ as  $\Phi_\lambda.$ The continuation principle for the Conley index, in the formulation of  \cite[Theorem 3.3]{Ba1},    asserts  that if  $S$ is an isolated invariant set for $\bar \Phi$,  then  the section $S_\lambda =\{ x \in X | (\lambda,x) \in S\}$ is an isolated invariant set for $ \Phi_\lambda$ and  $h( S) = h(S_\lambda)$  for all $\lambda \in I.$ In particular, $h(S_\lambda)$ is independent of $\lambda$.\\
With this said we continue with the proof of the proposition. Assume that for $\epsilon$ small enough  there are no nontrivial vertical  critical points of the family $\psi$ on $\overline{B(0,\epsilon)}.$  Set $ f_\lambda(h) = - \nabla \psi_\lambda(h).$ Possibly after multiplication by a smooth bump function equal to one on  $\overline{B(0,\epsilon)}$,  we can assume without loss of generality that each vector field $f_\lambda$ is complete. Consider the homotopy $\Phi \colon I \times\mathbb{R} \times X \rightarrow X $  defined by the family of flows induced by the  differential equation  $ \dot h =   f(\lambda, h)$ and  take the flow $\bar \Phi$ constructed above.  Then the set $ S = I\times\{0\}$ is an isolated invariant set with isolating neighborhood $N = I \times \overline{B(0,\epsilon)}.$ Indeed, if there is an orbit of  $\bar\Phi$  passing through a point $(\lambda,h)$ with $h\neq 0$ and contained in $N$, then by the above discussion the orbit is of the form $\{\lambda \} \times \gamma(h)$ where  $\gamma (h)$ is the orbit of $\Phi_\lambda $ through $h$. Since  $\Phi_\lambda $ is of gradient type, the $\alpha $ and the $\omega$ limit of $\gamma(h)$ should be two different critical points of $\psi_\lambda$ in $\overline{B(0,\epsilon)},$ which is impossible.  But then,  by  the continuation principle  and the computation of the Conley index of a non-degenerate equilibrium point, we get

\[\mu(l_a) =\mu(\psi_a,0)= \mu(\psi_b,0) =\mu(l_b)\] 
contradicting the hypothesis.
\end{proof}

\vskip10pt

We now prove the second part of Theorem \ref{thm:bif}. Since  the singular set  $\Sigma(L)$ is finite, for each $\la_0  \in \Sigma(L) $ we can define {\it the spectral flow of $L$ across} $\la_0$  by

\[\sfl(L,\la_0)=\lim_{\varepsilon\rightarrow 0}\sfl(L,[\la_0-\varepsilon,\la_0+\varepsilon]).\]
That this limit exists is a consequence of the additivity and normalization properties of the spectral flow. By the first assertion of Theorem \ref{thm:bif}, $\la_0$ is a bifurcation point if $\sfl(L,\la_0)\neq 0.$

\begin{lemma}\label{sflred}
Let $L:I\rightarrow\Phi_S(H)$ be a path such that $L_\la$ fails to be invertible  only at $\la_0\in (a,b)$. Then $|\sfl(L,\la_0)|\leq\dim\ker L_{\la_0}.$
\end{lemma}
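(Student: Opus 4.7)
The plan is to localize the spectral flow near $\la_0$ and reduce it to a path of self-adjoint operators on the finite-dimensional space $\ker L_{\la_0}$, then invoke the elementary bound that a Morse index on an $m$-dimensional space lies in $\{0,1,\ldots,m\}$. Set $m=\dim\ker L_{\la_0}$ and let $P$ be the orthogonal projection onto $\ker L_{\la_0}$. Since $L_{\la_0}$ is self-adjoint Fredholm, its restriction to $(\ker L_{\la_0})^{\perp}=\im L_{\la_0}$ is an isomorphism. By continuity of $L$ and openness of the set of isomorphisms, there exists $\eps>0$ such that on $J_\eps:=[\la_0-\eps,\la_0+\eps]$ the block $(I-P)L_\la(I-P)$ is invertible on $(I-P)H$, while the endpoint operators $L_{\la_0\pm\eps}$ are invertible on all of $H$.

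First I would imitate the splitting argument leading to Proposition \ref{sflowfin}. Using the straight-line homotopy
\[H(t,\la)=tL_\la+(1-t)\bigl(PL_\la P+(I-P)L_\la(I-P)\bigr),\]
and shrinking $\eps$ if necessary so that $H(t,\la_0\pm\eps)$ stays invertible for every $t\in[0,1]$, the homotopy invariance, direct sum, and normalization properties yield
\[\sfl(L,J_\eps)=\sfl\bigl(PLP|_{PH},J_\eps\bigr)+\sfl\bigl((I-P)L(I-P)|_{(I-P)H},J_\eps\bigr)=\sfl(L^P,J_\eps),\]
where $L^P_\la:=PL_\la P\colon PH\to PH$ is a continuous path of self-adjoint operators on the $m$-dimensional Hilbert space $PH=\ker L_{\la_0}$, with invertible endpoints (this last point requires a further small reduction of $\eps$, guaranteed by continuity).

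Then I would close by invoking the Morse index property (ii) of the spectral flow: in finite dimensions
\[\sfl(L^P,J_\eps)=\mu(L^P_{\la_0-\eps})-\mu(L^P_{\la_0+\eps}),\]
and since each of these Morse indices lies in $\{0,\ldots,m\}$, the difference has absolute value at most $m$. Taking $\eps\to 0$ gives $|\sfl(L,\la_0)|\leq m=\dim\ker L_{\la_0}$.

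The principal technical point is not the bound on Morse indices, which is trivial, but rather the verification that the linear homotopy between $L_\la$ and its block-diagonal piece preserves invertibility at the endpoints $\la_0\pm\eps$. This is not automatic — in particular $L_{\la_0}$ itself is singular — but follows from continuity and Fredholm openness once $\eps$ is chosen sufficiently small, exactly in the spirit of \cite[Lemma 4]{CFP} used in the proof of Proposition \ref{sflowfin}.
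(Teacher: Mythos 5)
Your reduction to $\ker L_{\la_0}$ is the right target, but the mechanism you use to get there has a genuine gap, and it sits precisely at the point you yourself flag as the ``principal technical point'': neither the invertibility of $H(t,\la_0\pm\eps)$ for all $t\in[0,1]$ nor the invertibility of the endpoints $PL_{\la_0\pm\eps}P|_{PH}$ can be obtained by shrinking $\eps$. For the \emph{fixed} projection $P$ onto $\ker L_{\la_0}$, the invertibility of $L_\la$ for $\la\neq\la_0$ may be carried entirely by the off-diagonal blocks $PL_\la(\Id-P)$, which your block-diagonal compression discards; unlike in \cite[Lemma 4]{CFP}, where $P_n\to\Id$ strongly and compactness of $K_\la$ makes the discarded blocks small in norm for $n$ large, here there is no parameter forcing them to be small. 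Concretely, take $\la_0=0$ and
\[L_\la=\begin{pmatrix}0&\la\\ \la&1\end{pmatrix}\]
on $\mathbb{R}^2$ (direct-sum with a fixed invertible self-adjoint operator on an infinite-dimensional complement if you insist on $\dim H=\infty$; this changes nothing). Then $L_\la$ is invertible for every $\la\neq0$ and $\dim\ker L_0=1$, but with $P$ the projection onto the first coordinate one has $PL_\la P|_{PH}=0$ for \emph{every} $\la$, so $L^P$ never has invertible endpoints, and $H(0,\pm\eps)=PL_{\pm\eps}P+(\Id-P)L_{\pm\eps}(\Id-P)$ is singular for every $\eps>0$, so the straight-line homotopy leaves the admissible class at $t=0$. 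Hence the identity $\sfl(L,J_\eps)=\sfl(L^P,J_\eps)$ is not justified, and the final Morse-index step is not even well posed.

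The paper's proof repairs exactly this defect by splitting with respect to the \emph{moving} spectral projections of $L_\la$ itself: choose $\varepsilon$ so that $0$ is the only spectrum of $L_{\la_0}$ in $[-\varepsilon,\varepsilon]$ and let $P_\la$ be the spectral projection of $L_\la$ for $[-\varepsilon,\varepsilon]$. These projections reduce $L_\la$ exactly, so there is no off-diagonal block to homotope away; a Kato path of orthogonal isomorphisms $U_\la$ with $P_\la=U_\la P_{\la_0}U_\la^\ast$ and $U_{\la_0}=\Id$ then transports the moving splitting to the fixed one $H=\ker L_{\la_0}\oplus(\ker L_{\la_0})^\perp$, the summand on $(\ker L_{\la_0})^\perp$ is a path of invertible operators, and the Morse-index bound on the finite-dimensional summand gives the estimate. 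If you want to keep your scheme, you must replace the fixed kernel projection by these spectral projections (or otherwise control the off-diagonal blocks), which is essentially the paper's argument.
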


\begin{proof}
Since $L_{\la_0}$ is a self-adjoint Fredholm operator, there exists $\varepsilon>0$ such that $0$ is the only point in the spectrum of $L_{\la_0}$ in the interval $[-\varepsilon,\varepsilon]$.\\ 
Let us take  $I=[\la_0-\delta,\la_0+\delta],$  where  $\delta>0$ is  such that $\pm\varepsilon$ is not in the spectrum of $L_\la$ for all $\la\in I $. Let $P_\la$, $\la\in I$, denote the orthogonal projection onto the spectral subspace of $L_\la$ corresponding to the spectrum inside $[-\varepsilon,\varepsilon]$. Then $P:I\rightarrow\mathcal{L}(H)$ is continuous, for each $\la\in I,$ the projection $P_\la$ reduces $L_\la$ and the restriction of $L_\la$ to the kernel of $P_\la$ is an isomorphism. By possibly using a smaller $\delta>0$, we can assume that
$\|P_\la-P_{\la_0}\|<1,$
and by following the construction in \cite[Remark 4.4, Chap II]{Kato}, there is a path $U:I\rightarrow\mathcal{L}(H)$ of orthogonal isomorphisms such that

\[P_\la=U_\la P_{\la_0}U^\ast_\la,\quad \la\in I,\quad\text{and}\quad U_{\la_0}=\Id.\]
From the homotopy invariance property iv) of the spectral flow, it is readily seen as in the proof of the first part of Theorem \ref{thm:bif} that

\[\sfl(L,I)=\sfl(U^\ast LU,I).\]
Moreover, each $U^\ast_\la L_\la U_\la$ commutes with $P_{\la_0}$ and so is reduced by this projection. Setting $H_0=\ker L_{\la_0}$ and $H_1=H^\perp_0$, we obtain 

\[U^\ast_\la L_\la U_\la=(U^\ast_\la L_\la U_\la\mid_{H_0})\oplus (U^\ast_\la L_\la U_\la\mid_{H_1})\]
and we conclude from the property iii) of the spectral flow that

\[\sfl(U^\ast LU,I)=\sfl(U^\ast LU\mid_{H_0},I)+\sfl(U^\ast LU\mid_{H_1},I).\]
Since $L_\la$ is invertible for all $\la\neq \la_0$ and $U_{\la_0}=\Id$, we have  $\ker U^\ast_\la L_\la U_\la\subset H_0$ for all $\la\in I$ and consequently $U^\ast LU\mid_{H_1}$ is a path of invertible operators. Using i) and ii) we get:

\begin{align*}
&|\sfl(L,\lambda_0)|=|\sfl(U^\ast LU,I)|=|\sfl(U^\ast LU\mid_{H_0},I)|=\\
&=|\mu(U^\ast_{\la_0-\delta} L_{\la_0-\delta}U_{\la_0-\delta}\mid_{H_0})-\mu(U^\ast_{\la_0+\delta} L_{\la_0+\delta}U_{\la_0+\delta}\mid_{H_0})|\leq  \\ &\leq\dim H_0=\dim\ker L_{\la_0}.
\end{align*}
\end{proof}

Since  the set  $B(\psi)$ of  bifurcation points of $\psi$  is a subset of  $\Sigma(L),$ its cardinality  is a finite number $N.$ Using again the additivity and the normalization properties of the spectral flow, we obtain 

\begin{equation} \label{inequality}
|\sfl(L,I)|=\left|\sum_{\la \in \Sigma(L) }{\sfl(L,\la )}\right|\leq\sum_{\la \in \Sigma(L) }{|\sfl(L,\la)|}.
\end{equation}
On the other hand, since the points of $\Sigma(L)\setminus B(\psi)$ do not give any positive contribution to the right hand side of \eqref{inequality}, we have 

\[ |\sfl(L,I)|\leq\sum_{\la \in B(\psi) }{|\sfl(L,\la)|}  \leq Nm,\] by the previous lemma.
Consequently the number of bifurcation points $N$ is bounded from below by the quotient $|\sfl(L,I)|/m.$


\section{Proof of Theorem \ref{thm:fam} and Corollary \ref{cor:fam} }\label{sec:fam}

In order to simplify notations, we will henceforth drop the domain  $I$ of the path from the notation and use $\sfl(L)$ to denote $\sfl(L,I).$\\
We will show at first the assertion i). Assume that $\gamma:I\rightarrow\Lambda$ is an admissible path such that $\sfl(L\circ\gamma)\neq 0$. Suppose  that $ \Lambda \setminus B(\psi)$ is connected. Join $\ga(1)$ with $\ga(0)$  by a path  $\tilde{\gamma}:I\rightarrow\Lambda$  such that $\tilde{\gamma}(I)\cap B(\psi)=\emptyset$. We must have $\sfl(L\circ\tilde{\gamma})=0.$ Otherwise, by Theorem \ref{thm:bif}, the  family of functionals $\phi \colon I \times U \ra \R$ defined by   $\phi(t, u) = \psi(\ga(t) ,u)$  would have a bifurcation point  $t_*.$ But then, $\ga(t_*)$ would be a point of intersection of $\tilde{\gamma}(I)$ with $B(\psi).$\\  
From the additivity of the spectral flow, we see that

\begin{align*}
\sfl(L\circ\gamma)=\sfl(L\circ\gamma)+\sfl(L\circ\tilde{\gamma})=\sfl(L\circ(\gamma\ast\tilde{\gamma})),
\end{align*}
where $\gamma\ast\tilde{\gamma}$ denotes the concatenation of $\gamma$ and $\tilde{\gamma}$. Since $\gamma\ast\tilde{\gamma}$ is a closed path, by assumption (A),  $\sfl(L\circ\gamma)=\sfl(L\circ(\gamma\ast\tilde{\gamma}))=0$, a contradiction.\\
Let us now prove ii). By choosing a point  $\la_0\in \Lambda\setminus \Sigma(L)$, we can associate an index $i(C)$ to each component of $\Lambda\setminus B(\psi)$ which contains elements of $\Lambda\setminus\Sigma(L)$ by defining  $i(C) = \sfl (L\circ \ga) $ where  $\ga$ is any admissible path joining $\la_0$ with a point of $C$. Much as in i),  $ \sfl (L\circ \ga) $ is  independent from the  choice of a point in $C\cap(\Lambda\setminus\Sigma(L))$,  and hence  the index is well defined. But then, for any admissible path $\ga$ we have that $$\sfl (L\circ \ga) = i(C_{\ga(0)}) - i(C_{\ga(1)}).$$ Therefore if the number of components were finite, the function  $ \ga \to \sfl (L\circ \ga)$ would  take only a finite number of values, contradicting the assumption.\\
In order to prove iii), we observe at first that,  given any admissible path $\ga$ such that  $L\circ\ga$  has only isolated singular points,   we can find another  path $\bar \ga$  with the same property  but which never returns to the same component.  For this, let $ \mu_1 < \mu_2 < \dots < \mu_k $ be the singular points of $L\circ \ga$ and set $\mu_0=0$, $\mu_{k+1}=1$.  We enumerate the components  of  $\Lambda \setminus B(\psi)$  traversed  by $\ga$  in an increasing order  according to the ordering  of $\mu_i.$ We obtain in this way  a list  of components $(C_0, C_1, \dots, C_k) $ with $\ga(0) \in C_0 $ and $\ga(1) \in C_k.$  If there is an index pair $i<j $ such that $C_i =C_j = C,$ we can obtain a shorter path $\ga'$ by connecting  $\ga(\mu_i)$ with $\ga(\mu_{j+1})$ by a path $\delta$ such that $\delta (0,1)\subset C.$ The path $\ga'$ has the same properties as $\ga$, but $C$ will appear in its list one time less. Iterating this procedure we obtain a path $\bar \gamma$ which never  traverses twice the same component.  Now the assertion follows from $ii)$ of Theorem \ref{thm:bif} applied to the path $L\circ \bar\ga.$\\
In order to prove Corollary \ref{cor:fam}  in the case when $\Lambda$  is a smooth manifold,  it is enough  to observe that, by density of transversality,  in the above reduction procedure  we can  approximate  the path $\delta $  with a differentiable path contained in the component $C$ that is  transversal to $\cS$ obtaining  in this way a new path  joining   $\ga(\mu_i)$ with $\ga(\mu_{j+1}),$  which composed with $L$ will still  have a finite number of singular points.  On topological manifolds, using the  dimension assumption, we can find inside $C$ a path $\delta$ which avoids $\Sigma(L) $ because sets of dimension $n-2$ or less cannot separate $C.$ 
\qed


\section{Bifurcation of periodic orbits for  perturbations of autonomous Hamiltonian systems}\label{sec:apl}
We assume that $\Lambda$ is a connected topological space and $\mathcal{H}:\Lambda\times\mathbb{R}\times\mathbb{R}^{2n}\rightarrow\mathbb{R}$ is a continuous function such that each $\mathcal{H}_\lambda$ is $C^2$ and its first and second partial derivatives depend continuously on $\lambda\in \Lambda$. Moreover, we require that $\mathcal{H}(\lambda,t,u)$ is $2\pi$-periodic with respect to $t$, and that $\mathcal{H}(\lambda,t,0)=0$ for all $(\lambda,t)\in\Lambda\times\mathbb{R}$. Let us consider Hamiltonian systems

\begin{equation}\label{equation}
\left\{
\begin{aligned}
\sigma u'(t)+\nabla_u&\mathcal{H}(\lambda,t,u(t))=0,\quad t\in [0,2\pi]\\
u(0)&=u(2\pi),
\end{aligned}
\right.
\end{equation}  
where $\sigma$ denotes the standard symplectic matrix. Note that $u\equiv 0$ is a solution of \eqref{equation} for all $\lambda\in\Lambda$.  In what follows we shall also assume the two following conditions:

\begin{enumerate}
	\item[(H1)] There are constant $a,b\geq 0$ and $r>1$ such that
	 \begin{align*}
\begin{split}
|\nabla_u\mathcal{H}(\lambda,t,u)|&\leq a+b|u|^r,\\
|D_u\nabla_u\mathcal{H}(\lambda,t,u)|&\leq a+b|u|^r,\quad (\lambda,t,u)\in  \Lambda\times\mathbb{R}\times\mathbb{R}^{2n}.
\end{split}
\end{align*}
\item[(H2)] There exists a continuous family $A_\lambda$ of real time-independent symmetric $2n\times 2n$ matrices such that
\[\mathcal{H}(\lambda,t,u)=\frac{1}{2}\langle A_\lambda u,u\rangle+R(\lambda,t,u),\quad (\lambda,t,u)\in\Lambda\times\mathbb{R}\times\mathbb{R}^{2n},\]
where $\nabla_u R(\lambda,t,u)=o(\|u\|)$ as $\|u\|\rightarrow 0,$ uniformly in $(\la,t).$ 
\end{enumerate}
Let us recall that the Hilbert space $H^\frac{1}{2}(S^1,\mathbb{R}^{2n})$ consists of all functions $u:[0,2\pi]\rightarrow\mathbb{R}^{2n}$ such that
\begin{align*}
u(t)=c_0+\sum^\infty_{k=1}{a_k\sin\,kt+b_k\cos\,kt},
\end{align*}
where $c_0,a_k,b_k\in\mathbb{R}^{2n}$, $k\in\mathbb{N}$, and $\sum^\infty_{k=1}{k(|a_k|^2+|b_k|^2)}<\infty.$ The scalar product on $H^\frac{1}{2}(S^1,\mathbb{R}^{2n})$ is defined by

\begin{align*}
\langle u,v\rangle_{H^\frac{1}{2}}=\langle c_0,\tilde{c}_0\rangle+\sum^\infty_{k=1}{k(\langle a_k,\tilde{a}_k\rangle+\langle b_k,\tilde{b}_k)\rangle},
\end{align*}
where $\tilde{c}_0$ and $\tilde{a}_k,\tilde{b}_k$ denote the Fourier coefficients of $v\in H^\frac{1}{2}(S^1,\mathbb{R}^{2n})$.\\
Let

\[\Gamma:H^\frac{1}{2}(S^1,\mathbb{R}^{2n})\times H^\frac{1}{2}(S^1,\mathbb{R}^{2n})\rightarrow\mathbb{R}\]
be the unique continuous extension of the bounded bilinear form

\[\tilde{\Gamma}(u,v)=\int^{2\pi}_0{\langle\sigma u'(t),v(t)\rangle\,dt}\]
defined on the dense subspace $H^1(S^1,\mathbb{R}^{2n})$ of $H^\frac{1}{2}(S^1,\mathbb{R}^{2n})$, which consists of all absolutely continuous functions $u:S^1\rightarrow\mathbb{R}^{2n}$ having a square integrable derivative. We consider the map

\[\psi:\Lambda\times H^\frac{1}{2}(S^1,\mathbb{R}^{2n})\rightarrow\mathbb{R},\quad \psi_\lambda(u)=\frac{1}{2}\,\Gamma(u,u)+\int^{2\pi}_0{\mathcal{H}(\lambda,t,u(t))\,dt}.\] 
It is a standard result that each $\psi_\lambda$ is $C^2$ under the growth conditions (H1). Moreover, 

\[\langle \nabla_u\psi_\lambda  u,v\rangle_{H^\frac{1}{2}}=\Gamma(u,v)+\int^{2\pi}_0{\langle\nabla_u\mathcal{H}(\lambda,t,u(t)),v(t)\rangle\,dt},\quad v\in H^\frac{1}{2}(S^1,\mathbb{R}^{2n}),\]
and consequently, the critical points of $\psi_\lambda$ are precisely the weak solutions of the Hamiltonian system \eqref{equation}. In particular,$ \ u\equiv 0\in H^\frac{1}{2}(S^1,\mathbb{R}^{2n})$ is a critical point of  $\psi_\lambda,$ whose Hessian is defined by

\[\langle L_\lambda u,v\rangle_{H^\frac{1}{2}}=\Gamma(u,v)+\int^{2\pi}_0{\langle A_\lambda u(t),v(t)\rangle\,dt}.\] 
By the compactness of the embedding $H^\frac{1}{2}(S^1,\mathbb{R}^{2n})\hookrightarrow L^2(S^1,\mathbb{R}^{2n})$, the self-adjoint operators $L_\lambda$ are Fredholm. Moreover, $L_\lambda$ is invertible if and only if the matrix $A_\lambda$ is non-resonant, i.e., the spectrum of $\sigma A_\lambda$ does not contain integral multiples of the imaginary unit $i$.\\
We now define for any $2n\times 2n$ matrix $A$, a sequence of $4n\times 4n$ matrices by

\begin{align*}
L^0(A)=\begin{pmatrix}
A&0\\
0&A
\end{pmatrix},\quad L^k(A)=\begin{pmatrix}
\frac{1}{k}A&\sigma\\
-\sigma&\frac{1}{k}A
\end{pmatrix},\quad k\in\mathbb{N}.
\end{align*} 
Note that for $k$ sufficiently large, the matrices $L^k(A)$ become arbitrarily close to an invertible symmetric matrix of vanishing signature. Hence $\sgn L^k(A)=0$ for sufficiently large $k.$ This fact allows us to define  the  \textit{index}  $i(A)$ of the matrix $A$ by

\[i(A)=\frac{1}{2}\sum^\infty_{k=1}{\sgn L^k(A)}.\] 
It was shown in \cite[\S 1]{SFLPejsachowiczII} that, if  $\Lambda=[a,b]$ is a compact interval and the matrices $A_b$ and $A_a$ are non-resonant, so that $L$ has invertible ends, then 

\begin{equation}\label{indexdif}
\sfl(L)=i(A_b)-i(A_a).
\end{equation} 
As an immediate consequence of Theorem \ref{thm:fam} we obtain:

\begin{prop}\label{prop:Ham}
Let $\Lambda$ be a connected  topological space. Assume that {\rm (H1)} and {\rm (H2)}  hold for $\mathcal{H}$.
\begin{itemize}
	\item[i)] If there exist  $\lambda,\mu \in\Lambda$ such that the matrices $A_{\lambda}, A_{\mu}$ are non-resonant and $i(A_{\lambda})\neq i(A_{\mu})$, then $ \Lambda\setminus B(\psi)$ is disconnected.\\
	
		\item[ii)] If there exists a sequence $\{\lambda_n\}_{n\in\mathbb{N}}\subset\Lambda$ such that $A_{\lambda_n}$ is non-resonant for all $n\in\mathbb{N}$ and 	
	$\lim_{n\rightarrow\infty}|i(A_{\lambda_n})|=\infty,$
	then $ \Lambda\setminus B(\psi)$ has infinitely many path components. \\
	
	 \item[iii)]  If  $B(\psi)=  \{\la : A_\la \text{ is resonant}\},$   then  any path $\ga$  joining  two non-resonant parameters $\lambda$ and $\mu$ such that  $A\circ\ga$  has only isolated resonant points must  cross at least   $\frac{|i(A_{\lambda})- i(A_{\mu})|}{2n} +1 $  components of $\Lambda \setminus B(\psi).$ 
\end{itemize}
\end{prop}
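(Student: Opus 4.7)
The plan is to obtain all three conclusions as direct applications of Theorem~\ref{thm:fam}, once two preparatory facts are in place: that assumption~(A) holds for the family $\psi$, and that $\dim \ker L_\lambda$ is uniformly bounded by $2n$.

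For~(A) I would observe that any admissible path $\gamma \colon I \to \Lambda$ yields an admissible path $L \circ \gamma$ of self-adjoint Fredholm operators with non-resonant endpoints, so that formula~\eqref{indexdif} applies and gives
\begin{equation*}
\sfl(L\circ\gamma, I) = i(A_{\gamma(b)}) - i(A_{\gamma(a)}).
\end{equation*}
Since the right-hand side depends only on the endpoints of $\gamma$, every closed admissible path has vanishing spectral flow, so~(A) holds.

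With~(A) in hand, part~(i) follows by choosing any admissible path $\gamma$ joining $\lambda$ and $\mu$ in $\Lambda$: the above formula gives $\sfl(L\circ\gamma) = i(A_\mu) - i(A_\lambda) \neq 0$, and Theorem~\ref{thm:fam}(i) disconnects $\Lambda\setminus B(\psi)$. Part~(ii) is analogous: fix a non-resonant base point $\lambda_0$ (for instance $\lambda_0 = \lambda_1$) and, for each $n$, choose an admissible path $\gamma_n$ from $\lambda_0$ to $\lambda_n$; then $|\sfl(L\circ\gamma_n)| = |i(A_{\lambda_n}) - i(A_{\lambda_0})| \to \infty$, and Theorem~\ref{thm:fam}(ii) produces infinitely many path components.

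For~(iii) I would first note that $\Sigma(L) = B(\psi)$ holds by hypothesis, because $L_\lambda$ fails to be invertible precisely at resonant parameters. The remaining ingredient is the bound $m \leq 2n$, which I would establish by identifying $\ker L_\lambda$ with the space of $2\pi$-periodic classical solutions of the linear Hamiltonian system $\sigma u' + A_\lambda u = 0$; such solutions inject into the $2n$-dimensional phase space $\mathbb{R}^{2n}$ via evaluation at $t=0$, so $\dim \ker L_\lambda \leq 2n$. Applying Theorem~\ref{thm:fam}(iii) with $m \leq 2n$ and the spectral flow formula above then yields the stated lower bound $|i(A_\mu) - i(A_\lambda)|/(2n) + 1$ on the number of components crossed by $\gamma$. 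The only genuinely non-formal step here is the kernel dimension bound; everything else is bookkeeping once~\eqref{indexdif} has been invoked.
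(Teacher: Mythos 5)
Your proposal is correct and follows essentially the same route as the paper, which presents the proposition as an immediate consequence of Theorem~\ref{thm:fam} together with the preceding setup ($L_\lambda$ Fredholm, invertible precisely at non-resonant parameters, and the formula~\eqref{indexdif}, with assumption~(A) holding because all the Hessians are compact perturbations of the fixed operator representing $\Gamma$, cf.~\eqref{murel}). You merely make explicit the same ingredients the paper leaves implicit -- (A) via endpoint-dependence of the spectral flow, $\Sigma(L)=B(\psi)$ from the resonance characterization, and the bound $\dim\ker L_\lambda\le 2n$ via evaluation of periodic solutions at $t=0$ -- so there is nothing to add.
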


From Corollary \ref{cor:fam}  we obtain:

\begin{cor}\label{cor:new}
Let $\Lambda, \mathcal{H}$ be as in the previous proposition. Assume moreover  that $\Lambda$ is a smooth $n$-manifold and that  the map $ A $ is smooth and transversal  to the variety  $\cR$  of all resonant matrices.  Assume moreover  that  there exists  a path $\ga\colon (-\infty ,\infty)\ra \Lambda$ such that $A\circ\ga$ is  a  positive path of symmetric  matrices.   Then,  irrelevant of the choice of the higher order perturbation $R,$  the number of connected components of $\Lambda \setminus B(\psi)$ is not less than $ \frac{k}{2n} +1,$  where $k$ is the number of intersections of the spectrum of $\sigma A\circ \ga$ with $i\Z.$  
\end{cor}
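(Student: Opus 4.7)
The plan is to apply Corollary \ref{cor:fam} iii) to the positive path $\gamma$. First I would verify the standing hypotheses of Theorem \ref{thm:fam} in this Hamiltonian setting. Assumption (A) follows at once from \eqref{indexdif}: for any closed admissible path $\eta$ one has $\sfl(L\circ\eta) = i(A_{\eta(b)}) - i(A_{\eta(a)}) = 0$ since the endpoints of $\eta$ coincide. Smoothness and transversality of $A$ to the resonant variety $\mathcal{R}$ imply that $\Sigma(L) = A^{-1}(\mathcal{R})$ is a stratified submanifold of $\Lambda$ of positive codimension, placing us exactly in the setting of Corollary \ref{cor:fam}.

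Next I would transfer the positivity of the matrix-valued path $A\circ\gamma$ to the operator-valued path $L\circ\gamma$ in the sense of case 3) of the discussion following Theorem \ref{thm:bif}. Differentiating $L\circ\gamma$ in the parameter yields multiplication by $\dot A\circ\gamma$, so the crossing form on $\ker L_{\gamma(t_0)}$ is
\[
u\;\longmapsto\;\int_0^{2\pi}\langle \dot A_{\gamma(t_0)} u(s), u(s)\rangle\,ds,
\]
which is positive definite on that finite-dimensional subspace whenever $\dot A_{\gamma(t_0)}$ is positive definite. By case 3) the singular set of $L\circ\gamma$ is then discrete and
\[
\sfl(L\circ\gamma|_{[T_0,T_1]}) \;=\; \sum_{t\in\Sigma(L\circ\gamma)\cap[T_0,T_1]} \dim\ker L_{\gamma(t)}
\]
on any subinterval $[T_0,T_1]$ with non-resonant endpoints.

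At each singular point $t_0$, elements of $\ker L_{\gamma(t_0)}$ are precisely the $2\pi$-periodic solutions of the constant-coefficient linear system $\sigma u' + A_{\gamma(t_0)} u = 0$, whose real dimension equals the number (with multiplicity) of eigenvalues of $\sigma A_{\gamma(t_0)}$ lying in $i\mathbb{Z}$. Summing over the finitely many crossings inside $[T_0,T_1]$ therefore reproduces exactly the count $k$ of intersections of the spectrum of $\sigma A\circ\gamma$ with $i\mathbb{Z}$ on that subinterval. Moreover each $\ker L_\lambda$ embeds into the $2n$-dimensional solution space of a first order linear ODE on $\mathbb{R}^{2n}$, so the number $m$ in Corollary \ref{cor:fam} satisfies $m \leq 2n$. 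Choosing $T_0, T_1$ so that $A_{\gamma(T_i)}$ are non-resonant and $[T_0,T_1]$ captures all crossings when $k$ is finite (and exhausting by such subintervals otherwise), Corollary \ref{cor:fam} iii) applied to $\gamma|_{[T_0,T_1]}$ yields at least $|\sfl(L\circ\gamma|_{[T_0,T_1]})|/m + 1 \geq k/(2n) + 1$ components of $\Lambda\setminus B(\psi)$, as claimed.

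The main obstacle I anticipate is the second step: identifying the crossing form of $L\circ\gamma$ carefully and showing that it inherits positive-definiteness on the finite-dimensional kernel from the pointwise positive-definiteness of $\dot A\circ\gamma$. Once that is in place, the rest of the argument is a translation of standard facts about constant-coefficient linear Hamiltonian systems into the framework already established in the paper.
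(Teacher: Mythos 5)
Your proposal is correct and follows essentially the same route as the paper, which derives the corollary directly from Corollary \ref{cor:fam} iii): transversality of $A$ to $\cR$ puts $\Sigma(L)=A^{-1}(\cR)$ in the stratified-submanifold setting, positivity of $A\circ\ga$ makes $L\circ\ga$ a positive path (so its spectral flow is the sum of kernel dimensions, i.e.\ the count $k$ of spectral intersections with $i\Z$), and $\dim\ker L_\la\leq 2n$ gives the bound $m\leq 2n$. Your explicit computation of the crossing form $u\mapsto\int_0^{2\pi}\langle \dot A_{\ga(t_0)}u(s),u(s)\rangle\,ds$ and the choice of non-resonant endpoints $T_0,T_1$ are exactly the details the paper leaves implicit.
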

 
For  example:  let  $\cA$  be the space of  all symmetric $2n\times 2n$ matrices and consider the  problem of bifurcation of periodic solutions for  the parametrized system of Hamiltonian equations \eqref{equation}   with  Hamiltonian  function $\mathcal{H}(A,t,u)=\langle Au,u\rangle+R(A,t,u),$ where  $A\in \cA$ and $R$ is  any family of $C^2$  periodic functions continuously parametrized by $\cA$, such that $\nabla_u R(A,t,u)=o(\|u\|)$ uniformly in $(A,t).$  The positive path $\ga(\la) = \la\Id,$  is resonant for all $\la \in \Z.$ Therefore,  the set $\cA \setminus B(\psi)$ must  have infinitely many path components.  Note however  that  $\Sigma(L) = B(\psi)=\cR$ in this case.


\section{Extended spectral flow and comparison theorems}\label{sec:comp}  
 
 For what follows it will be convenient for us to extend the definition of the spectral flow to general paths  in $\fs$ with not necessarily invertible end-points.  For a non-invertible operator $T\in \fs$, $0$  is always  an isolated eigenvalue of finite multiplicity.  Hence there exists a $\delta>0$ such that  $T+ \la\,\Id$ is invertible for
$0<\la\leq \delta.$  We define the spectral  flow of a general path $L\colon I \ra \fs$ as the spectral flow of the translated path 

\begin{equation}\label{eq:specgen}
    \spf(L) \, = \, \spf (L + \delta\,\Id),
\end{equation} 
where  $\delta>0$ is as above. Clearly, the right hand side  does not depend on the choice of $\delta$. The resulting function is  additive under concatenation and direct sum. It is homotopy invariant under homotopies keeping the end-points fixed. As a matter of fact, it is invariant under homotopies which keep the end points in a fixed stratum of $\Sigma,$ but we will not make use of this property here.\\
The cone of positive operators defines an order in $\mathcal L (H)$  compatible with its topology.  Given two bounded self-adjoint operators $T,S\in\mathcal{L}_S(H)$ we say that $T \geq S$ whenever $T-S$ is  positive, and that  $T > S$  whenever $T-S$ is positive definite, i.e.,  if there exists $c>0$ such that $\ip{(T-S)h}{h} \geq c\, \|h\|$  for all $h\in H.$   A path $L \colon I\ra  \cL_s(H)$ of self-adjoint operators is {\it non-decreasing}  if $L_\la \geq L_\mu$ whenever $\la \geq \mu$ and it is  {\it strictly increasing} if $L_\la > L_\mu$ whenever $\la>\mu.$ 

\begin{theorem}\label{thm:comparison} Let $H\colon [0,1] \times [a,b] \ra  \fs$ be a homotopy such that $H(\cdot,a) $ is non-increasing  and $H(\cdot,b)$ is non-decreasing,  then  
\begin{equation*}    
\spf H(0,\cdot) \leq \spf H(1,\cdot).
\end{equation*}
\end{theorem}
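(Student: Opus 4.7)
The natural plan is a ``rectangle homotopy'' argument combined with a monotonicity lemma for the extended spectral flow. Let $\gamma_a\colon[0,1]\to\Phi_S(H)$ denote the left edge $\gamma_a(s)=H(s,a)$ (non-increasing by hypothesis) and $\gamma_b\colon[0,1]\to\Phi_S(H)$ the right edge $\gamma_b(s)=H(s,b)$ (non-decreasing). Along the boundary of the rectangle $[0,1]\times[a,b]$ there are two natural paths joining $H(0,a)$ to $H(1,b)$:
$$\alpha \,=\, H(0,\cdot)\ast\gamma_b \qquad\text{and}\qquad \beta \,=\, \gamma_a\ast H(1,\cdot).$$
They share the same endpoints, and the restriction of $H$ to the rectangle furnishes a homotopy from $\alpha$ to $\beta$ inside $\Phi_S(H)$ keeping the endpoints fixed: one sweeps continuously through L-shaped paths joining the two corners through the interior of $[0,1]\times[a,b]$.

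Using the homotopy invariance of the extended spectral flow under endpoint-preserving homotopies, together with its additivity under concatenation (both recalled at the start of Section \ref{sec:comp}), one obtains $\spf(\alpha)=\spf(\beta)$, hence
$$\spf H(0,\cdot) + \spf(\gamma_b) \,=\, \spf(\gamma_a) + \spf H(1,\cdot),$$
that is, $\spf H(1,\cdot) - \spf H(0,\cdot) = \spf(\gamma_b) - \spf(\gamma_a).$ The theorem would then follow at once from the following monotonicity lemma: \emph{the extended spectral flow of a non-decreasing path in $\Phi_S(H)$ is non-negative, and that of a non-increasing path is non-positive.}

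Establishing this monotonicity lemma is the crux of the argument. After translating by $\delta\,\Id$ for small $\delta>0$ (a shift that preserves monotonicity), one reduces to admissible paths. For essentially positive or essentially negative paths the lemma is elementary: $\spf(L)=\mu(L_a)-\mu(L_b)$, and the Morse index is monotonically non-increasing along a non-decreasing path by the min-max principle, since a negative-definite subspace for $L_b$ remains negative-definite for any $L_a\le L_b$. The principal technical difficulty is the strongly indefinite case, because the cogredient parametrix $M^\ast L M$ used in Section \ref{sec:sfl} generally destroys pointwise monotonicity. A natural workaround is to pass through the perturbation $L_\lambda+\varepsilon(\lambda-a)\Id$ with $\varepsilon>0$ small: this gives a strictly increasing path homotopic to $L$ through admissible paths (hence with the same spectral flow) which falls into the class of continuous positive paths described in item 4) after Theorem \ref{thm:bif}, for which $\spf(L)=\sum_{\lambda}\dim\ker L_\lambda\ge 0$. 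An alternative is to carry out the finite-dimensional reduction of Section \ref{sec:sfl} directly, bypassing the parametrix, and note that the reduction $L\mapsto P_nLP_n|_{H_n}$ preserves pointwise monotonicity, so that the result follows from the finite-dimensional version already established.
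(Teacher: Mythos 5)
Your reduction of the theorem to a monotonicity lemma is exactly the paper's: the identity $\spf H(1,\cdot)-\spf H(0,\cdot)=\spf(\gamma_b)-\spf(\gamma_a)$ is obtained in the paper from the vanishing of the extended spectral flow around the boundary of the contractible rectangle $[0,1]\times[a,b]$, which is the same bookkeeping as your two L-shaped paths. The genuine gap lies in your proof of the monotonicity lemma itself, which is the real content of the theorem (the paper's Proposition \ref{posit}). Your main route -- perturbing to $L_\la+\varepsilon(\la-a)\Id$ and invoking item 4) after Theorem \ref{thm:bif} -- is not justified: that item is not merely ``positivity at crossings''. Besides the two inequalities on $\ker L_{\la_0}$ (which your perturbation does provide, with $\gamma(\la)=\varepsilon(\la-\la_0)$), it requires the quantitative condition $\lim_{\la\to\la_0}\Vert L(\la)-L(\la_0)\Vert^2/\gamma(\la)=0$, i.e.\ essentially $\Vert L(\la)-L(\la_0)\Vert=o(|\la-\la_0|^{1/2})$. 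This is a restriction on the modulus of continuity of the path that neither continuity nor monotonicity gives you, and it is precisely what is used in the cited reference to prove that the singular points are isolated and that $\spf=\sum\dim\ker$. As it stands, the appeal to item 4) assumes a regularity you do not have, and what you would be extracting from it is essentially the statement to be proved.

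Your fallback is also not available as stated: the finite-dimensional reduction of Section \ref{sec:sfl} (Proposition \ref{sflowfin}) is proved only for paths of the form $J+K_\la$, i.e.\ after the cogredient parametrix has been applied, and for a general path in $\fs$ the compressions $P_nL_\la P_n\mid_{H_n}$ with respect to a fixed basis do not compute the spectral flow (already for a path of invertible operators rotating the polarization, the difference of the Morse indices of the compressed end-points need not vanish); so ``bypassing the parametrix'' is not a remark but the entire difficulty, and, as you note yourself, the parametrix destroys monotonicity. The paper's proof of Proposition \ref{posit} avoids both problems by a local convexity argument: partition $[a,b]$ so finely that each piece of $L$ lies in a ball contained in $\fs$, replace each piece by the affine segment joining its end-points (an end-point--fixed homotopy inside that ball), shift by a small multiple of the identity to make the segment admissible, and observe that the segment is a differentiable path whose crossing forms are positive semidefinite by monotonicity and in fact positive definite, since a degenerate direction would be annihilated by both end-points of the segment, contradicting their invertibility; hence each piece contributes a spectral flow equal to a sum of kernel dimensions, and summing over the partition yields $\spf(L)\ge 0$. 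An argument of comparable substance is needed at exactly the point where you invoke item 4).
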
 

If two paths $L$ and  $M$ are such that $L-M$ is a path of compact operators,  then  the family  $H(t,\la)= M + t(L-M)$ is a homotopy in $\fs$  between them.  It verifies the hypothesis of the previous theorem whenever $L_a\leq M_a$  and $M_b\leq L_b.$  Therefore we have: 

\begin{cor} \label{cor:comparison} 
Let $L, M \colon [a,b] \ra \fs$ be  paths such that  $L_\la -M_\la $ is  compact for each $\la \in I.$  If $L_a\leq M_a$  and $M_b\leq L_b,$ then 
\begin{equation*}  \label{eq: geqcomp}
\sfl M \leq \sfl L.
\end{equation*}
\end{cor}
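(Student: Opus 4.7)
The plan is to reduce the corollary directly to Theorem \ref{thm:comparison} via the linear homotopy
\[
H \colon [0,1]\times [a,b] \to \mathcal{L}_S(H), \qquad H(t,\la) = M_\la + t\,(L_\la - M_\la),
\]
which was already flagged in the paragraph preceding the statement. So the proof is really just a verification that this homotopy satisfies the hypotheses of Theorem \ref{thm:comparison} and the observation that $H(0,\cdot)=M$ and $H(1,\cdot)=L$.

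First I would check that $H$ actually lands in $\fs$. For fixed $\la$, the operator $H(t,\la)$ is obtained from the Fredholm self-adjoint operator $M_\la$ by adding $t(L_\la - M_\la)$, which is compact by hypothesis; hence $H(t,\la)\in\fs$ for every $(t,\la)$, and the joint continuity is obvious from the continuity of $L$ and $M$. Note also that since we are working with the extended spectral flow of \eqref{eq:specgen}, we do not need invertibility at the endpoints $\la=a,b$, which is important because the hypotheses $L_a\leq M_a$ and $M_b\leq L_b$ say nothing about invertibility.

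Next I would verify the monotonicity of the two endpoint paths $t\mapsto H(t,a)$ and $t\mapsto H(t,b)$. For $t_1\geq t_2$ one has
\[
H(t_1,a)-H(t_2,a) = (t_1-t_2)(L_a-M_a) \leq 0
\]
by the assumption $L_a\leq M_a$, so $H(\cdot,a)$ is non-increasing, and similarly
\[
H(t_1,b)-H(t_2,b) = (t_1-t_2)(L_b-M_b) \geq 0
\]
by $M_b\leq L_b$, so $H(\cdot,b)$ is non-decreasing. Theorem \ref{thm:comparison} then yields
\[
\sfl M = \sfl H(0,\cdot) \leq \sfl H(1,\cdot) = \sfl L,
\]
which is the claim.

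There is no real obstacle here; the only subtlety worth explicitly pointing out is that the comparison Theorem \ref{thm:comparison} is invoked in the extended-spectral-flow setting \eqref{eq:specgen}, so the argument works even if $M_a, M_b, L_a, L_b$ are non-invertible. All other data needed, namely the Fredholmness of the homotopy and the two monotonicity conditions, follow immediately from the compactness of $L_\la-M_\la$ and from the two order inequalities at the endpoints.
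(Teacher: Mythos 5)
Your proof is correct and is essentially the paper's own argument: the paper likewise observes (in the paragraph preceding the corollary) that the affine homotopy $H(t,\la)=M_\la+t(L_\la-M_\la)$ stays in $\fs$ by compactness of $L_\la-M_\la$, that the end-point paths are non-increasing at $a$ and non-decreasing at $b$ by the order hypotheses, and then applies Theorem \ref{thm:comparison}. Your explicit verification of the monotonicity and the remark about the extended spectral flow handling non-invertible end-points just fill in the details the paper leaves implicit.
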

 
Theorem \ref{thm:comparison} is an easy consequence of the following proposition:
  
\begin{prop} \label{posit}
If $L\colon [a,b] \ra \fs$ is non-decreasing, then $\spf(L)\geq0.$
\end{prop}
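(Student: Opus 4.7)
The plan is to reduce the proposition, through two successive homotopies, to the case of a smooth strictly increasing path with isolated regular crossings, where the conclusion follows at once from the signature formula \eqref{sumsfl}.

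First, using the definition \eqref{eq:specgen} of the extended spectral flow, I may add a small positive multiple of the identity to $L$ and thereby assume that $L_a$ and $L_b$ are invertible; adding $\delta\,\Id$ clearly preserves monotonicity. Next, I smooth $L$: extending $L$ constantly beyond $[a,b]$ and convolving with a non-negative smooth bump $\eta_\varepsilon$ in the $\lambda$-variable yields a $C^{\infty}$ path $L^{\varepsilon}$. The key observation is that the convolution of a non-decreasing operator-valued function with a non-negative kernel is itself a non-decreasing $C^{\infty}$ function, since $L^{\varepsilon}_\lambda-L^{\varepsilon}_\mu=\int(L_{\lambda-\varepsilon s}-L_{\mu-\varepsilon s})\eta_\varepsilon(s)\,ds\geq 0$ for $\lambda\geq\mu$. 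Because $L^{\varepsilon}\to L$ uniformly on $[a,b]$ and both Fredholmness and invertibility at the endpoints are open conditions, for $\varepsilon$ small the straight-line homotopy from $L$ to $L^{\varepsilon}$ is admissible, so $\sfl(L)=\sfl(L^{\varepsilon})$ by property iv).

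The second homotopy converts the smooth non-decreasing path into a strictly increasing one. Setting $\tilde L^{\varepsilon}_\lambda=L^{\varepsilon}_\lambda+\kappa(\lambda-a)\,\Id$ for $\kappa>0$ sufficiently small, one has $\dot{\tilde L}^{\varepsilon}_\lambda\geq\kappa\,\Id>0$, while the linear homotopy between $L^{\varepsilon}$ and $\tilde L^{\varepsilon}$ keeps the endpoints invertible, giving $\sfl(\tilde L^{\varepsilon})=\sfl(L)$. Now $\tilde L^{\varepsilon}$ is a smooth path whose derivative is a positive definite operator at every point, so at each singular point $\lambda_0$ the crossing form $\mathcal Q(\lambda_0)=\langle\dot{\tilde L}^{\varepsilon}_{\lambda_0}\cdot,\cdot\rangle$, restricted to $\ker\tilde L^{\varepsilon}_{\lambda_0}$, is positive definite. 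Hence every crossing is regular and $\sig\mathcal Q(\lambda_0)=\dim\ker\tilde L^{\varepsilon}_{\lambda_0}$; by \cite[Theorem 4.1]{FPR} the singular set is finite, and the formula \eqref{sumsfl} yields $\sfl(L)=\sfl(\tilde L^{\varepsilon})=\sum_{\lambda\in\Sigma(\tilde L^{\varepsilon})}\dim\ker\tilde L^{\varepsilon}_\lambda\geq 0$.

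The principal technical point is the bookkeeping for the two intermediate homotopies: one must verify that Fredholmness and invertibility at the endpoints persist along each connecting family. This is routine by the openness of $\Phi_S(H)$ in $\mathcal L_S(H)$ together with the uniform continuity of $L$ on the compact interval $[a,b]$.
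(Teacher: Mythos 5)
Your proof is correct, but it follows a genuinely different route from the paper's. You regularize globally: mollify the (translated) path to get a smooth non-decreasing path, then tilt it by $\kappa(\lambda-a)\,\Id$ so that every crossing of the resulting path is regular with positive definite crossing form, and conclude from the regular-crossing formula \eqref{sumsfl} together with the isolatedness of regular crossings from \cite[Theorem 4.1]{FPR}. The paper instead argues locally and stays with continuous data: it partitions $[a,b]$ so that on each subinterval the path lies in a ball of $\Phi_S(H)$, replaces the restriction by the affine segment joining its endpoints via a fixed-endpoint homotopy (convexity of the ball keeps everything Fredholm), translates that segment by a small $\mu\,\Id$ to make the endpoints invertible, checks by a direct computation that the crossing form of the affine segment is positive definite (here one needs the small observation that $L_d-L_c\geq 0$ together with invertibility of the translated endpoints forces definiteness on the kernel — a point your $\kappa\,\Id$ tilt makes automatic), and then sums the non-negative contributions by concatenation. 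What each approach buys: the paper's version avoids smoothing and avoids global perturbations by multiples of the identity, so no uniformity issues arise beyond the Lebesgue-number argument; your version leans on the established regular-crossing machinery and so is shorter at the level of the crossing computation, at the cost of the mollification bookkeeping and of the one point you dismiss as routine — namely that $L^{\varepsilon}_\lambda + s\,\Id$ stays in $\Phi_S(H)$ along the tilting homotopy, which is not automatic for adding a multiple of the identity to a Fredholm operator but does follow from openness of $\Phi_S(H)$ and compactness of the image of the path (a uniform essential-spectral-gap argument); it would be worth a sentence making that explicit.
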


\begin{proof}
Let us choose, for each $\la \in I=[a,b],$ a ball $ B_\la =B( L_\la, \ve_\la )\subset \fs$  and  take  a partition $\pi$ of  $I$ of mesh less than the Lebesgue number of the covering $\cU =\{ L^{-1}(B_\la) : \la \in I\}.$  Then the  image  $L(I') $ of any interval  $I'$ of the partition is contained in some  ball $B_{I'}\subset \fs.$\\
Select an interval  $J=[c,d]$ of the partition $\pi,$ and the  straight line segment  $$N_\la =(d-c)^{-1}[ (\la-c)L_{d  }+ (d-\la) L_{c }] $$  joining $L_{c}$ with $L_{d}.$ Consider the affine  homotopy  $H(t,\la ) = (1- t)\,M_\la  + t\,N_\la $  between  the path $M =L_{|J}$  and  $N.$  Each $H(t,\la)$ is an element of $\fs$  because $ L_{|J}(J)$  is contained in $ B_J.$  Since $H$  is a homotopy  in $\fs$ with fixed end-points,  we have  $\spf M = \spf N = \spf (N + \delta\, \Id),$  where $\delta$ is as in \eqref{eq:specgen} and small enough in order to have  $(N + \delta\, \Id)(J) \subset B_J.$\\
Now, if we take a sufficiently small $ \mu>0$, we have that the segment $R,$ given by  $$ R_\la = (d-c)^{-1}[ (\la-c)(L_{d}+ \mu\, \Id) + (d-\la) (L_{c} + \mu\,\Id)], $$ is homotopic in $\fs$ to $N$  by an affine homotopy keeping end-points invertible.  Therefore,  $\spf  M = \spf R .$  But now $R$ is a differentiable  path and by direct calculation we see that, whenever $\ker R_\la\neq 0$, the crossing form  $\mathcal{Q}( R, \la)$ is positive definite and hence non-degenerate. The spectral flow  in this case  is simply the sum of the dimensions  of  the corresponding kernels. Hence  

\[\spf(L_{|J})=\spf(R)=\sum_ {\la \in J} \dim \ker R_\la \geq 0.\]
The proposition is obtained by summing over all intervals of the chosen partition.
\end{proof}

In order to  conclude the proof of the comparison theorem  from the above  proposition,  it is enough to recall that the spectral flow along a closed path is invariant by free homotopies. For a proof of this fact see \cite[Proposition 3.7]{FPR}. In particular, since the  rectangle $D=[0,1]\times [a,b]$ is contractible, the spectral flow  of the restriction of $H$ to its  boundary $\partial D$ must be zero. From this it follows that $\spf (H(0,\cdot)) \leq \spf (H(1,\cdot)), $ since  $\spf (H(1-\cdot,a))= -\sfl(H(\cdot,a))\geq 0 $  and $\spf( H(\cdot,b)) \geq 0$ by Proposition \ref{posit}.


\section{ Estimates for the number of bifurcation points for general systems} \label{sec:est}
In this section we estimate from below  the number of bifurcation points for periodic orbits bifurcating from a given branch of a family of non-autonomous  Hamiltonian systems. Our aim  is to obtain  the estimates  directly  in terms of the coefficients of the linearization along the trivial branch.\\
Here  the  parameter space  $\Lambda$ will be the interval  $[0,1].$  Moreover we assume that  $\mathcal{H}: [0,1]\times\mathbb{R}\times\mathbb{R}^{2n}\rightarrow\mathbb{R}$ is a continuous  function, which is  $2\pi$-periodic with respect to $t,$ and such that each $\mathcal{H}_\lambda$ is $C^2$ and its first and second partial derivatives depend continuously on $(\lambda,t,u)$.  We also assume  $\mathcal{H}(\lambda,t,0)=0$ for all $(\lambda,t)\in I\times\mathbb{R}$ and take as the trivial branch of periodic solutions  the stationary branch $u(t)\equiv 0.$   It is shown in \cite{FPR} that bifurcation from a general branch of periodic solutions can be easily reduced to the above case.\\
Much as before, we are looking for  points  of bifurcation of nontrivial  $2\pi$-periodic solutions of \eqref{equation} from the stationary branch,  but here we do not assume that the system \eqref{equation}  is a higher order perturbation of an autonomous system. Namely,  while keeping  the assumption (H1) we  drop (H2).
We still have that critical points of

\[ \psi_\lambda(u)=\frac{1}{2}\,\Gamma(u,u)+\int^{2\pi}_0{\mathcal{H}(\lambda,t,u(t))\,dt},\]  
are weak solutions of \eqref{equation}.  Moreover the Hessian $L_\la$  of $\psi_\la $ at $u\equiv 0$  is given  by 
\begin{equation} \label{Hessian} \langle L_\lambda u,v\rangle_{H^\frac{1}{2}}=\Gamma(u,v)+\int^{2\pi}_0{\langle A_\lambda(t) u(t),v(t)\rangle\,dt},  \end{equation} where  $A_\la(t)= D_u\nabla_u\mathcal{H}_\la (t,0).$\\ 
In the general case of a time depending system $\sigma u'(t)+ A_\la(t) u(t) $ we do not have an index defined directly in terms of the coefficients as before. However, the spectral flow $\sfl(L,I)$ can still  be computed from  the relative Conley-Zehnder index of the path  $\{P_\la\}_{\la\in I} $ of Poincar\'{e} maps (cf. \cite{SFLPejsachowiczII}). On its turn, the relative Conley-Zehnder index of the path  $P$ can be explicitly computed from  the eigenvalues of $P_0$ and $P_1.$  Nevertheless  the Poincar\'{e} map  can be only obtained by integrating the linearization  and  cannot be considered as  computable from the coefficients.\\
When $A_i (t) \equiv A_i,  i=0,1,$ are constant, then one can  use  \eqref{indexdif} in order to estimate from below by  $\frac{|i(A_1)-i(A_0)|}{2n}$  the number of bifurcation points  for paths verifying  the hypothesis of  ii)  in Theorem \ref{thm:bif}.  Using the  Floquet reduction one  can always deform the linearized equation to one with constant coefficients.  However,  this procedure involves the  monodromy  operator too. In view of this, the estimate provided below, while rough, appears to be reasonable.  It is based on the numerical range of the matrices $A_i(t), i=0,1.$\\ 
Let  $\{\mu^i_1(t)\leq \mu^i_2(t) \leq  \dots \leq \mu^i_{2n}(t)\}$ be the eigenvalues   of $A_i(t), i=0,1.$  Set 
$ \alpha_i = \inf \{\mu^i_1(t): t \in I \}$ and $\beta_i = \sup \{\mu^i_{2n}(t): t\in I\}.$  Then we have 

\begin{equation}\label{bounds}  
\alpha_i\, \Id \leq A_i (t) \leq \beta_i \,\Id,\quad i=0,1.
\end{equation} 
Consider the path $B_\la =( \beta_0 + \la (\alpha_1 - \beta_0))\,\Id $ and let $M$ be the path of operators  on ${H^\frac{1}{2}}$  defined by 

\begin{equation}\label{weaksol}
\langle M_\lambda u,v\rangle_{H^\frac{1}{2}}=\Gamma(u,v)+\int^{2\pi}_0{\langle  B_\la u(t),v(t)\rangle\,dt}.
\end{equation} 
Then $L$ and $M$ are Calkin equivalent, since both are compact perturbations of the operator $\cD$ representing the quadratic form $\Gamma$  against the scalar product in ${H^\frac{1}{2}}.$ Moreover, 

\[\langle( L_\lambda - M_\la ) u,v\rangle_{H^\frac{1}{2}}=\int^{2\pi}_0{\langle (A_\lambda(t)-B_\la)  u(t),v(t)\rangle\,dt}.\] 
Taking $\la =0,1$ and using \eqref{bounds} we see that the paths $L$ and $M$ verify the hypothesis of Corollary  \ref{cor:comparison}. Therefore,

\begin{equation}\label{sflM}
\sfl M\leq \sfl L.
\end{equation}
In the same way  taking  $C_\la = \alpha_0 + \la (\beta_1 - \alpha_0)\, \Id$ and  applying  Corollary \ref{cor:comparison} to the  operator path $N$ induced on ${H^\frac{1}{2}}$  by   $ \sigma u'(t)+C_\la u(t),$ we obtain 

\begin{equation} \label{sflN}
\sfl L\leq \sfl N. 
\end{equation}
On the other hand, it follows from the definition of the extended spectral flow in \eqref {eq:specgen} that the formula \eqref{murel}, relating the spectral flow of a path of compact perturbations of a fixed operator  with the relative Morse  index of its end-points holds even when the end-points are non-invertible, provided that  we consider as $ E^+(T)$ the spectral subspace corresponding  the spectrum of $T$  in $(0, +\infty) $ and $E^-(T)$  the spectral subspace of the spectrum in $(-\infty,0].$ With the above setting we have:

\begin{equation}\label{murel2}
\begin{aligned}\sfl(M)&=\mu_{rel}(M_0,M_1)\\&=
\dim( E^-(M_0)\cap E^+(M_1) ) - \dim( E^-(M_1)\cap E^+(M_0) ),
\end{aligned}\end{equation}
an similarly for $N.$\\
Let us introduce  two  unbounded self-adjoint operators $\cM_0, \cM_1$  with domain $$\cD =H^1(S^1; \R^{2n})\subset L^2(S^1; \R^{2n}).$$ Namely, $\cM_0 = \sigma u'(t) + \beta_0 u(t) $ and  $\cM_1= \sigma u'(t) + \alpha_1 u(t).$   Since   $H^\frac{1}{2}(S^1,\mathbb{R}^{2n})$ is the form space of the operator $\sigma u'(t),$  it follows that  $M_i$ are the form extensions of $\cM_i.$  While the spectra of  $M_i$ and $\cM_i$ are necessarily different, it follows from the regularity  of solutions of \eqref{weaksol} that  they have the  same "positive" and "negative"  eigenvectors. Hence we can compute the right hand side of \eqref{murel2}, in terms of  $E^\pm(\cM_i)$ as well. For this, let us identify $\R^{2n}$ with $\C^n\equiv \R^n + i\R^n$ via   \[u=(x,y)\equiv z= x+iy.\] Under the above identification, the action of $\sigma$  coincides with the multiplication by $i$  and $\cM_i, i=0,1, $ become:   

\begin{equation}\label{B1}
\begin{array}{ccccc} \begin{cases}iz^\prime(t)+ \beta_0 z(t)=0 
\\z(0) = z(2\pi)\end{cases} & \quad \hand \quad &   \begin{cases}iz^\prime(t)+\alpha_1 z(t)=0 
\\z(0) = z(2\pi).\end{cases} & & \end{array}
\end{equation}
Their eigenvalues are of the form $ m + \beta_0$ and  $ m + \alpha_1,$ with $m \in \Z$ respectively, with (complex) multiplicity $n.$\\
Given real numbers $\mu$ and $\nu,$ define 
\begin{equation}\label{Delta} \Delta (\mu,\nu) = \left\{
\begin{aligned}
\ \  \ &\#\{ i\in\Z: \mu \leq i < \nu \}  &\hif \   \mu\leq\nu\\
-&\#\{ i\in\Z : \nu\leq i < \mu \}  &\ \ \hif \ \nu\leq\mu.
\end{aligned} \right.
\end{equation}
Then clearly 

\[\mu_{rel}(M_0,M_1)=\mu_{rel}(\cM_0,\cM_1)=  2n\, \Delta (\beta_0, \alpha_1).\]
A similar computation shows that 

\[\mu_{rel}(N_0,N_1)= \mu_{rel}(\cN_0,\cN_1)=  2n\, \Delta(\alpha_0, \beta_1).\]
By \eqref{sflM}, \eqref{sflN}  and \eqref{murel2} we have 

\[2n\, \Delta (\beta_0, \alpha_1)=\mu_{rel}(M_0,M_1) \leq \sfl L \leq \mu_{rel}(N_0,N_1)= 2n\, \Delta(\alpha_0, \beta_1).\]    
From this and Theorem \ref{thm:bif} ii) we can conclude that   
 
\begin{prop} \label{prop:last}  
Assume that the linearization of the problem  \eqref{equation}   along the stationary branch 

\begin{equation}\label{equation1}
\left\{
\begin{aligned}
\sigma u'(t)&+A_\la(t)u(t)=0 \\
u(0)&=u(2\pi),
\end{aligned}
\right.
\end{equation}    
admits  only trivial solutions for all but a finite number of values of $\la.$\\
Then

\begin{itemize} 
\item[i)]  If $\beta_0 < \alpha_1$, the family \eqref{equation}  has at least $\Delta (\beta_0, \alpha_1)$ points of bifurcation of periodic solutions from the stationary branch.
\item[ii)]  If $ \beta_1 < \alpha_0$,   the family \eqref{equation}  has at least  $-\Delta( \alpha_0, \beta_1)$  bifurcation points. 
\end{itemize}
\end{prop}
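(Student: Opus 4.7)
The plan is to apply Theorem \ref{thm:bif} ii) to the path $L$ of Hessians defined by \eqref{Hessian}. The two-sided spectral flow inequality
\[2n\,\Delta(\beta_0,\alpha_1) \,\le\, \sfl L \,\le\, 2n\,\Delta(\alpha_0,\beta_1)\]
is already in place by the discussion immediately preceding the statement, and the bookkeeping in \eqref{murel2} and \eqref{B1}. What remains is to bound from above the quantity $m = \max_{\lambda\in I}\dim\ker L_\lambda$ and then to feed everything into part ii) of Theorem \ref{thm:bif}.

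First I would show $m \le 2n$. By elliptic regularity for the first-order system (or directly by integration by parts against smooth test functions), any $u \in \ker L_\lambda$ is a classical $2\pi$-periodic solution of $\sigma u'(t)+A_\lambda(t)u(t)=0$. The solution space of this linear ODE on $\R^{2n}$ has dimension exactly $2n$, so $\ker L_\lambda$ embeds into it and $\dim\ker L_\lambda\le 2n$ for every $\lambda$. This gives $m\le 2n$. The assumption that the periodic boundary value problem \eqref{equation1} has only trivial solutions for all but finitely many $\lambda$ translates directly into: $L_\lambda$ is invertible off a finite subset of $I$, which is exactly the isolated-crossings hypothesis of Theorem \ref{thm:bif} ii). If an endpoint happens to be singular, we replace $I=[0,1]$ with a slightly smaller interval $[\varepsilon,1-\varepsilon]$ in the complement of the finite singular set; the extended spectral flow defined in \eqref{eq:specgen} together with the concatenation property shows that the bounds on $\sfl L$ still transfer, and the finitely many endpoint singular points that are lost contribute at most finitely many bifurcation points separately (so the lower bound argument is unaffected).

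With $m \le 2n$ in hand, Theorem \ref{thm:bif} ii) yields at least $|\sfl L|/m \ge |\sfl L|/(2n)$ bifurcation points in $(0,1)$. For case i), the hypothesis $\beta_0<\alpha_1$ forces $\Delta(\beta_0,\alpha_1)>0$ by definition \eqref{Delta}, hence
\[\sfl L \,\ge\, 2n\,\Delta(\beta_0,\alpha_1) \,>\, 0,\]
and dividing by $2n$ gives the claimed lower bound $\Delta(\beta_0,\alpha_1)$. For case ii), the hypothesis $\beta_1<\alpha_0$ forces $\Delta(\alpha_0,\beta_1)<0$, hence
\[\sfl L \,\le\, 2n\,\Delta(\alpha_0,\beta_1) \,<\, 0,\]
so $|\sfl L|\ge -2n\,\Delta(\alpha_0,\beta_1)$ and the number of bifurcation points is at least $-\Delta(\alpha_0,\beta_1)$.

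The only non-routine step is the kernel bound $m\le 2n$ together with the care needed to handle potentially non-invertible endpoints; the main conceptual content is already packaged in the comparison principle (Corollary \ref{cor:comparison}) and in the integer-counting identification of the relative Morse index in \eqref{B1}--\eqref{Delta}, both of which are cited rather than redone.
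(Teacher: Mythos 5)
Your proposal is correct and follows essentially the same route as the paper: the two-sided bound $2n\,\Delta(\beta_0,\alpha_1)\leq\sfl L\leq 2n\,\Delta(\alpha_0,\beta_1)$ established via Corollary \ref{cor:comparison} and \eqref{murel2}--\eqref{Delta}, combined with Theorem \ref{thm:bif} ii) and, when an endpoint is singular, restriction to a slightly smaller interval exactly as in the paper's remark. Your explicit verification that $m=\max_\lambda\dim\ker L_\lambda\leq 2n$ (kernel elements being $2\pi$-periodic solutions of the linear first-order system) is a useful step that the paper leaves implicit.
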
 

\begin{rem}
{\rm  Notice that we do not have to assume that the path is admissible. Indeed, it is enough to apply Theorem  \ref{thm:bif} ii) on $[ \delta, 1-\delta]$ for $\delta$ small enough.} 
\end{rem}


\thebibliography{99999999}

\bibitem[APS76]{AtPatSi76} M.F. Atiyah, V.K. Patodi, I.M. Singer, \textbf{Spectral Asymmetry and Riemannian Geometry III}, Proc. Cambridge Philos. Soc. \textbf{79}, 1976, 71--99

\bibitem[Ba93]{Ba} T. Bartsch,\textbf{ Topological Methods for Variational Problems with Symmetries,} Lecture Notes in Mathematics \textbf{1560}, Springer, 1993

\bibitem[Ba92]{Ba1} T. Bartsch, \textbf{The Conley index over a space}, Math. Z. \textbf{209}, 1992, 167--177

\bibitem[BW85]{BosandWoj85} B. Booss, K. Wojciechowski, \textbf{ Desuspension of splitting elliptic symbols \hbox{I}, } Ann. Glob. Anal. Geom. \textbf{3}, 1985, 337--383

\bibitem[CFP00]{CFP} E. Ciriza, P.M. Fitzpatrick, J. Pejsachowicz, \textbf{Uniqueness of Spectral Flow}, Math. Comp. Mod. \textbf{32}, 2000, 1495--1501

\bibitem[C78]{C} C. Conley, \textbf{Isolated invariant sets and the Morse index.} C. B. M. S. Reg. Conf. ser., v.38, 1978

\bibitem[CL88]{CL} S.-N. Chow, R. Lauterbach, \textbf{ A bifurcation theorem for critical points of variational problems}, Nonlinear Analysis, Theory, Methods \& Applications \textbf{12}, 51--61, 1988

\bibitem[CLM94]{CapLeMi} S.E. Cappell, R. Lee, E. Miller, \textbf{On the Maslov Index}, Comm. Pure Appl. Math. \textbf{47}, 1994, 121--186

\bibitem[Fl88]{Floer88} A. Floer, \textbf{An instanton invariant for 3-manifolds,} Comun. Math. Physics \textbf{118}, 1988, 215--240

\bibitem[FPS]{FPS}  P. M. Fitzpatrick, J. Pejsachowicz, C. A. Stuart,  {\bf Spectral Flow for Paths of Unbounded Operators and Bifurcation of Critical Points}, preprint

\bibitem[FPR99]{FPR}  P.M. Fitzpatrick, J. Pejsachowicz, L. Recht, \textbf{Spectral Flow and Bifurcation of Critical Points of Strongly-Indefinite Functionals-Part I: General Theory}, J. Funct. Anal. \textbf{162}, 1999, 52--95

\bibitem[FPR00]{SFLPejsachowiczII} P.M. Fitzpatrick, J. Pejsachowicz, L. Recht, 
\textbf{Spectral Flow and Bifurcation of Critical Points of Strongly-Indefinite Functionals-Part II: Bifurcation of Periodic Orbits of Hamiltonian Systems}, J. Differential Equations \textbf{163}, 2000, 18--40

\bibitem[HW48]{Hurewicz} W. Hurewicz, H. Wallmann, \textbf{Dimension Theory}, Princeton Mathematical Series \textbf{4}, Princeton University Press, 1948

\bibitem[Ka76]{Kato} T. Kato, \textbf{Perturbation theory for linear operators}, Second edition, Grundlehren der Mathematischen Wissenschaften \textbf{132}, Springer-Verlag, Berlin-New York, 1976

\bibitem[K04]{K} H.Kielh\"ofer, \textbf{Bifurcation Theory-An Introduction with Applications to PDEs}, Springer--Verlag, 2004 

\bibitem[MPP05]{MPP} M. Musso, J. Pejsachowicz, A. Portaluri, \textbf{A Morse Index Theorem for Perturbed Geodesics on Semi-Riemannian Manifolds}, Topol. Methods Nonlinear Anal. \textbf{25}, 2005, 69--99

\bibitem[Ni93]{NicolaescuCRAS} L. Nicolaescu, \textbf{The \hbox{M}aslov index, the spectral flow and splittings of manifolds}, C. R. Acad. Sci. Paris  \textbf{317}, 1993, 515--519

\bibitem[Ph96]{Phillips} J. Phillips, \textbf{Self-adjoint Fredholm Operators and Spectral Flow}, Canad. Math. Bull. \textbf{39}, 1996, 460--467

\bibitem[PW13]{PW} A. Portaluri, N. Waterstraat, \textbf{Bifurcation results for critical points of families of functionals}, submitted,	arXiv:1210.0417 [math.DG]

\bibitem[RS95]{RS} J. Robbin, D. Salamon, \textbf{The Spectral Flow and the Maslov Index}, Bull. London Math. Soc \textbf{27}, 1995, 1--33

\bibitem[We76]{W} A. Weinstein, \textbf{Lectures on symplectic manifolds}, Expository lectures from the CBMS Regional Conference held at the University of North Carolina, 1976, Regional Conference Series in Mathematics \textbf{29}, American Mathematical Society, Providence, R.I.,  1977

\bibitem[W08]{Wah} C. Wahl, \textbf{A new topology on the space of unbounded self-adjoint operators, K-theory and spectral flow}, $C^\ast$-algebras and elliptic theory II, 297--309, Trends Math., Birkhäuser, Basel,  2008

\bibitem[Wa13]{spinors} N. Waterstraat, \textbf{A remark on the space of metrics having non-trivial harmonic spinors}, J. Fixed Point Theory Appl. \textbf{13}, 2013, 143--149, arXiv:1206.0499 [math.SP]

\vspace{1cm}
Jacobo Pejsachowicz\\
Dipartimento di Scienze Matematiche\\
Politecnico di Torino\\
Corso Duca degli Abruzzi, 24\\
10129 Torino\\
Italy\\
E-mail: jacobo.pejsachowicz@polito.it

\vspace{1cm}
Nils Waterstraat\\
Dipartimento di Scienze Matematiche\\
Politecnico di Torino\\
Corso Duca degli Abruzzi, 24\\
10129 Torino\\
Italy\\
E-mail: waterstraat@daad-alumni.de

\end{document}